\documentclass[a4paper]{amsart}

\usepackage{color}
\usepackage{amsxtra}
\usepackage{amssymb}
\usepackage{mathtools}
\usepackage{enumerate}
\usepackage{nicefrac}
\mathtoolsset{showonlyrefs} 
\usepackage{ mathrsfs }
\usepackage{tikz}
\usetikzlibrary{positioning, calc}
\usetikzlibrary{shadings}
\usepackage{color}
\usepackage{hyperref}
\hypersetup{
  colorlinks   = true, 
  urlcolor     = blue, 
  linkcolor    = blue, 
  citecolor   = red 
}

\parskip=4pt


\newtheorem{theorem}{Theorem}[section]
\newtheorem{lemma}{Lemma}[section]

\newtheorem{definition}{Definition}[section]
\newtheorem{corollary}{Corollary}[section]
\newtheorem{remark}{Remark}[section]

\title[Eigenvalues for a local and nonlocal
$p-$Laplacian]{
Eigenvalues for a combination between local and nonlocal
$p-$Laplacians
}

\author[L. M. Del Pezzo]{Leandro M. Del Pezzo}
	\address{Leandro M. Del Pezzo \hfill\break\indent
		CONICET and UTDT \hfill\break\indent
		Departamento de Matem\'aticas y
		Estad\'istica
		\hfill\break\indent Universidad Torcuato Di Tella
		\hfill\break\indent Av. Figueroa Alcorta 7350 (C1428BCW)
		\hfill\break\indent Buenos Aires, ARGENTINA. }
	\email{ldelpezzo@utdt.edu}
	\urladdr{http://cms.dm.uba.ar/Members/ldpezzo/}

\author[R. Ferreira]{Ra\'ul Ferreira}
\address{Ra\'ul Ferreira\hfill\break\indent
Departamento de Matem\'atica Aplicada,
\hfill\break\indent Fac. de C.C. Qu\'{\i}micas, U. Complutense de Madrid,
\hfill\break\indent 28040,  Madrid, Spain.
 }
\email{raul\_ferreira@mat.ucm.es}

\author[J. D. Rossi]{Julio D. Rossi}
    \address{Julio D. Rossi\hfill\break\indent
        CONICET and Departamento  de Matem{\'a}tica, FCEyN,\hfill\break\indent
        Universidad de Buenos Aires,
        \hfill\break\indent Pabellon I,
         Ciudad Universitaria (C1428BCW),
         \hfill\break\indent
        Buenos Aires, Argentina.}
\email{jrossi@dm.uba.ar}
\urladdr{http://mate.dm.uba.ar/~jrossi/}

\begin{document}

\begin{abstract}
In this paper we study the Dirichlet eigenvalue problem
$$
\begin{cases}
			-\Delta_p u-\Delta_{J,p}u =\lambda|u|^{p-2}u &\text{ in } \Omega,\\
			u=0 &\text{ in } \Omega^c=\mathbb{R}^N\setminus\Omega.
		\end{cases}
$$
Here $\Delta_p u$ is the standard local $p-$Laplacian, $\Delta_{J,p}u$ is a nonlocal, $p-$homogeneous operator of order zero and $\Omega$ is a bounded domain in $\mathbb{R}^N$.

We show that the first eigenvalue (that is isolated and simple)
satisfies $(\lambda_1)^{1/p}\to \Lambda$ as $p\to\infty$
where $\Lambda$ can be characterized in terms of the geometry of $\Omega$.
We also find  that the eigenfunctions converge, $u_\infty=\lim_{p\to\infty} u_p$, and
find
the limit problem that is satisfied in the limit.
\end{abstract}

\maketitle

\section{Introduction}

Eigenvalue problems for the $p-$Laplacian is by now a classical subject.
In fact, the study of eigenvalues, that is, nontrivial solutions to
$$
- \Delta_p u = - \mbox{div} (|\nabla u|^{p-2} \nabla u) = \lambda |u|^{p-2}u
$$
in a bounded smooth domain, $\Omega$, with Dirichlet boundary conditions
$u=0$ on $\partial \Omega$ starts with \cite{peral} (we refer to \cite{lind} and
references therein on the $p-$Laplacian). For this eigenvalue problem it is known that
there exists a sequence $\lambda_n \to \infty$ of eigenvalues, that the first (the smallest)
eigenvalue is isolated and simple and has a positive associated eigenfunction. The description
of the complete spectrum remains as a mayor open problem. There are two extreme cases that
deserved special attention, $p=1$ (related to the Cheeger problem, see \cite{parini}) and $p=\infty$
(this case is understood as the limit as $p\to \infty$, see \cite{JL,JLM}). The analysis of this
kind of problems goes far beyond this brief description, including eigenvalues with weights, variable
exponents, systems, etc.

On the other hand, recently,
there is an increasing interest in the study of nonlocal operators, like the fractional $p-$Laplacian.
For references concerning nonlocal fractional
	problems we refer to \cite{caffa,Jilka,LL,Hich} and
	references therein.
For the eigenvalue problem for this operator we refer to \cite{Brasco,DPQ,delpe,delpeRossi,Farca,LL,FP} where a detailed study was carry
over showing similarities and differences with the local case (one of the biggest differences is that
the restriction of an eigenfunction to a nodal domain is not an eigenfunction of this nodal domain due to the nonlocal
character of the problem).  Also in \cite{LL} the limit case $p\to \infty$ was studied in the fractional setting.
Concerning nonlocal operators of zero-order, that is, problems of the form
\[
		- \int_{\mathbb{R}^N}J(x-y)|u(x)-u(y)|^{p-2}(u(y)-u(x))\, dy = \lambda |u|^{p-2} u
	\]
involving non-singular and compactly supported kernels (with $u=0$ in $\mathbb{R}^N \setminus \Omega$)
we refer to \cite{ellibro,andreu2} where
it is shown that the constant that corresponds to the infimum of the corresponding Raleigh quotient
is strictly positive. However, it is not known that it is attained due to the lack of compactness.

	The purpose of this paper is to study the following Dirichlet eigenvalue problem
	\begin{equation}\label{eq:EP}
	    \begin{cases}
			\mathcal{L}_{J,p}(u) =\lambda|u|^{p-2}u &\text{ in } \Omega,\\
			u=0 &\text{ in } \Omega^c:=\mathbb{R}^N\setminus\Omega.
		\end{cases}
	\end{equation}
	Here $\Omega$ be a bounded connected domain in $\mathbb{R}^N$ with $C^{1,\alpha}$
	boundary, $p\geq 2$  and
	\[
		\mathcal{L}_{J,p}(u)\coloneqq-\Delta_p u-\Delta_{J,p}u
	\]
	where $$\Delta_p(u)\coloneqq\text{div}(|\nabla u|^{p-2}\nabla u)$$ is the usual local $p-$Laplacian
	operator and
	\[
		\Delta_{J,p}u\coloneqq 2\int_{\mathbb{R}^N}J(x-y)|u(x)-u(y)|^{p-2}(u(y)-u(x))\, dy,
	\]
	where the kernel $J\colon\mathbb{R}^N\to\mathbb{R}$ is a radially symmetric, nonnegative continuous function
	with compact support,  $J(0)>0$  and
	$	\int_{\mathbb{R}^N}J(x) dx=1$,	
	(this last condition is imposed just for normalization).
		Since we assumed $J$ to be continuous and compactly supported
		we let
		\[
			R_J\coloneqq\inf \left\{R\colon J(x)=0 \text{ in } \mathbb{R}^N \setminus B(0,R)\right\}\in(0,+\infty),
		\]
		and we also assume that $J(x) >0$ for $|x|< R_J$.
	
	A scalar $\lambda$ is called an eigenvalue of $\mathcal{L}_{J,p}$
	if there is a nontrivial weak solution $u\in W^{1,p}_0(\Omega)$ of
	\eqref{eq:EP}, that is, a function $u$ such that
	\[
	    \mathcal{H}_{J,p}(u,v)=\lambda\int_\Omega |u|^{p-2}uv dx
	    \quad \forall v\in W^{1,p}_0(\Omega),
	\]
	holds. Here
	\begin{align*}
		\mathcal{H}_{J,p}(u,v)
				\coloneqq&\int_{\Omega}|\nabla u|^{p-2}\nabla u\nabla v dx\\
				& +				\int_{\mathbb{R}^N}\int_{\mathbb{R}^N}J(x-y)|u(x)-u(y)|^{p-2}(u(x)-u(y))
				(v(x)-v(y)) dx dy.	
	\end{align*}
	Such $u$ is is called an eigenfunction corresponding to the eigenvalue $\lambda.$

	From the usual variational techniques we have that the first (the smallest) eigenvalue of \eqref{eq:EP} is
	\[
	    \lambda_1(p)\coloneqq\inf\left\{
	    \dfrac{\displaystyle \mathcal{H}_{J,p}(u,u)}
	    {\displaystyle\|u\|_p^p}\colon
	    u\in W^{1,p}_0(\Omega)\setminus\{0\}
	    \right\},
	\]	
	where $\|\cdot\|_p$ denotes the $L^p(\Omega)-$norm.
	
	Moreover, using a topological tool (the genus), we can construct a sequence of
	eigenvalues $\{\lambda_k\}_{k\in\mathbb{N}}$ of \eqref{eq:EP} such that
	$\lambda_k\nearrow \infty$ as $k\to\infty$.

	\begin{theorem}\label{teo.1.intro}
	   The value $\lambda_1(p)$ is the first eigenvalue of
	   \eqref{eq:EP} (if $\lambda$ is an eigenvalue of \eqref{eq:EP} then
	   $\lambda\ge\lambda_1(p)$). In addition  $\lambda_1(p)$
	   is simple and isolated,
	   and its corresponding eigenfunctions have constant sign.

	   Moreover, there exists a  sequence of
	eigenvalues $\{\lambda_k\}_{k\in\mathbb{N}}$ such that
	$\lambda_k\nearrow \infty$ as $k\to\infty$.
	Every eigenfunction $u$ verifies that $u\in C^{1,\alpha}(\overline{\Omega})$
	for some $\alpha\in(0,1).$
    \end{theorem}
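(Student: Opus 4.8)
The plan is to establish each assertion in turn, following the pattern of the classical $p$-Laplacian theory but paying attention to the extra nonlocal term. First I would check that $\lambda_1(p)$ is indeed attained and is an eigenvalue: take a minimizing sequence $\{u_n\}$ for the Rayleigh quotient normalized by $\|u_n\|_p=1$; since $\mathcal{H}_{J,p}(u_n,u_n)$ is bounded, in particular $\|\nabla u_n\|_p$ is bounded, so by reflexivity of $W^{1,p}_0(\Omega)$ and the Rellich--Kondrachov theorem a subsequence converges weakly in $W^{1,p}_0(\Omega)$ and strongly in $L^p(\Omega)$ to some $u$ with $\|u\|_p=1$. Weak lower semicontinuity of $v\mapsto\mathcal H_{J,p}(v,v)$ (the local part is w.l.s.c.\ by convexity; the nonlocal double integral is w.l.s.c.\ by Fatou together with strong $L^p$ convergence, or again by convexity) gives that $u$ is a minimizer. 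The Euler--Lagrange equation is then obtained by differentiating $t\mapsto \mathcal H_{J,p}(u+tv,u+tv)/\|u+tv\|_p^p$ at $t=0$, so $\lambda_1(p)$ is an eigenvalue. That $\lambda\ge\lambda_1(p)$ for any eigenvalue $\lambda$ is immediate: test the weak formulation with $v=u$ to get $\lambda=\mathcal H_{J,p}(u,u)/\|u\|_p^p\ge\lambda_1(p)$.

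Next, constant sign and simplicity. Since $\mathcal H_{J,p}(|u|,|u|)\le\mathcal H_{J,p}(u,u)$ — the local part is unchanged a.e.\ and the nonlocal part can only decrease, because $||u(x)|-|u(y)||\le|u(x)-u(y)|$ — any minimizer $u$ can be replaced by $|u|$, so there is a nonnegative first eigenfunction; the strong maximum principle (for the local $p$-Laplacian, the nonlocal term having the right sign) and the regularity $u\in C^{1,\alpha}(\overline\Omega)$ force it to be strictly positive in $\Omega$. For simplicity I would use the Díaz--Saá / hidden convexity argument: the functional $w\mapsto\mathcal H_{J,p}(w^{1/p},w^{1/p})$ is convex on nonnegative $w\in L^1$, and strict convexity along the segment joining $u_1^p$ and $u_2^p$ (for two positive first eigenfunctions $u_1,u_2$) forces $u_1=cu_2$; one checks the convexity of the nonlocal piece via the same pointwise inequality used in the local case (the map $(a,b)\mapsto|a^{1/p}-b^{1/p}|^p$ type convexity, see \cite{Brasco,DPQ}). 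Alternatively, one can quote the nonlocal hidden-convexity lemma from \cite{Brasco} verbatim for the double-integral term.

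For isolatedness, suppose $\lambda_k\to\lambda_1(p)$ with eigenfunctions $v_k$, $\|v_k\|_p=1$; boundedness in $W^{1,p}_0$ gives a subsequence converging strongly in $L^p$ and weakly in $W^{1,p}_0$ to a first eigenfunction, hence (by simplicity) $v_k\to u_1$ with $u_1>0$; but each $v_k$ with $\lambda_k>\lambda_1(p)$ must change sign (since the only constant-sign eigenfunctions correspond to $\lambda_1(p)$), and on the set $\{v_k<0\}$ a Poincaré/Sobolev-type inequality combined with the equation forces $|\{v_k<0\}|$ to stay bounded below — contradicting $v_k\to u_1>0$ in measure. For the sequence $\lambda_k\nearcone\infty$ I would use the standard Lusternik--Schnirelmann construction: on the manifold $\{\|u\|_p=1\}\subset W^{1,p}_0(\Omega)$ set $\lambda_k=\inf_{A\in\Sigma_k}\sup_{u\in A}\mathcal H_{J,p}(u,u)$ where $\Sigma_k$ are symmetric compact sets of Krasnoselskii genus $\ge k$; the Palais--Smale condition holds by the same compactness argument as above, so these are critical values hence eigenvalues, and they tend to $\infty$ because $W^{1,p}_0(\Omega)$ is infinite-dimensional. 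Finally, $C^{1,\alpha}(\overline\Omega)$ regularity: rewrite the equation as $-\Delta_p u=f$ with $f=\Delta_{J,p}u+\lambda|u|^{p-2}u$; since $u\in L^\infty(\Omega)$ (by a Moser/De Giorgi iteration on the weak formulation — the nonlocal term is a bounded lower-order perturbation) we get $f\in L^\infty$, and then the interior and boundary $C^{1,\alpha}$ estimates of DiBenedetto/Tolksdorf/Lieberman for the $p$-Laplacian with bounded right-hand side apply on the $C^{1,\alpha}$ domain $\Omega$.

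The main obstacle I expect is controlling the nonlocal term $\Delta_{J,p}u$ in the regularity step — specifically, showing that for a weak eigenfunction $u$ the quantity $\Delta_{J,p}u$ is bounded (so that $f\in L^\infty$): this needs the a priori $L^\infty$ bound on $u$, which must be proved first by a truncation argument in the weak formulation where one checks that the nonlocal contribution has a sign that helps (or is an absorbable perturbation). The simplicity proof is the other delicate point, since the hidden-convexity argument must be carried out for the full bilinear form $\mathcal H_{J,p}$, not just the local part; fortunately the needed convexity of the nonlocal double integral is exactly the one established for the fractional $p$-Laplacian in \cite{Brasco,DPQ}, and the same pointwise inequality applies here with $J(x-y)\,dx\,dy$ in place of $|x-y|^{-N-sp}\,dx\,dy$.
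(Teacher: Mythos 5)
Your overall architecture matches the paper's: direct minimization for existence of $\lambda_1(p)$, replacement of $u$ by $|u|$ plus a strong maximum principle for constant sign, a measure-theoretic lower bound on $|\{v_k<0\}|$ combined with Egorov for isolatedness, the genus/Lusternik--Schnirelmann construction for the sequence $\lambda_k\nearrow\infty$, and an $L^\infty$ bound by De Giorgi--type iteration followed by Lieberman's boundary $C^{1,\alpha}$ estimate \cite{regularity}. The one genuinely different choice is simplicity: you propose D\'iaz--Sa\'a / hidden convexity along $t\mapsto((1-t)u_1^p+tu_2^p)^{1/p}$, whereas the paper runs a Picone-identity argument, combining the local Picone identity of \cite{AH} with the discrete Picone inequality of \cite{A} for the double integral, testing the equation with $u^p/(v+1/n)^{p-1}$. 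Both are legitimate, and the convexity of the nonlocal term you invoke does transfer verbatim from \cite{Brasco,DPQ} with $J(x-y)\,dx\,dy$ in place of the fractional kernel.

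There is, however, one concrete gap created by that choice. Your isolatedness argument asserts that every $v_k$ with $\lambda_k>\lambda_1(p)$ must change sign ``since the only constant-sign eigenfunctions correspond to $\lambda_1(p)$,'' but your simplicity proof does not establish that statement: hidden convexity, as you set it up, compares two \emph{minimizers} and yields uniqueness of the first eigenfunction, whereas what the isolatedness step needs is the stronger fact that a nonnegative eigenfunction associated with \emph{any} eigenvalue $\lambda$ forces $\lambda=\lambda_1(p)$. That is precisely what the paper's Picone argument delivers (its Theorem \ref{thm:simple} is stated in exactly this stronger form), so if you keep the hidden-convexity route you must either additionally run the Picone computation for a positive eigenfunction at level $\lambda$, or invoke the Picone-type inequality that Brasco--Franzina deduce from hidden convexity. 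A minor further remark: your strong maximum principle ``for the local $p$-Laplacian with the nonlocal term having the right sign'' does work for $p\ge2$ (if $u\ge0$ and $u(x_0)=0$ then $\Delta_{J,p}u(x_0)\ge0$, so the classical SMP for $-\Delta_p$ applies), while the paper instead exploits the nonlocal term directly to propagate the zero set by balls of radius $R_J$ using connectedness of $\Omega$; either route is acceptable here.
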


    Now, we deal with the limit as $p\to \infty$.

    \begin{theorem}\label{thm:limiteInfty.intro}
        It holds that
        \[
	       \lim_{p\to \infty} \left[\lambda_1(p)\right]^{\nicefrac1p} =
	        \Lambda
        \]
        where
        \[
            \Lambda\coloneqq\inf\left\{
                \dfrac{\max\left\{\|\nabla u\|_{\infty},
                [u]_J\right\}}{\|u\|_\infty}\colon u\in W^{1,\infty}_0(\Omega)
            \right\}
        \]
        with
        \[
      [u]_J\coloneqq\sup\Big\{|u(x)-u(y)|\colon
            x-y\in \mbox{supp}(J)\Big\}.
    \]

        Moreover, let $u_{p}$ be a nonnegative eigenfunction corresponding to the first eigenvalue $\lambda_1(p)$  and normalize it by
        $\| u_p \|_p=1$. Then,
         there is a sequence $\{p_n\}_{n\in\mathbb{N}}$
        such that $p_n\to \infty,$ $u_{p_n}\to u_\infty \in W^{1,\infty}_0(\Omega)$ uniformly in
        $\overline{\Omega}$ and it holds that
         \[
            \Lambda=\frac{\max\Big\{\|\nabla u_\infty\|_{\infty},
                [u_\infty]_J\Big\}}{\|u_\infty\|_\infty}.
        \]
    \end{theorem}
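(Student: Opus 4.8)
The plan is to prove the two assertions in tandem: the convergence of $[\lambda_1(p)]^{1/p}$ to $\Lambda$ and the existence of a uniform limit $u_\infty$ realizing $\Lambda$. First I would establish upper and lower bounds for $[\lambda_1(p)]^{1/p}$ that pinch it to $\Lambda$ in the limit.

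\emph{Step 1: Upper bound.} Fix a competitor $v\in W^{1,\infty}_0(\Omega)\subset W^{1,p}_0(\Omega)$. Using $\mathcal{H}_{J,p}(v,v)=\|\nabla v\|_p^p+\tfrac12\iint J(x-y)|v(x)-v(y)|^p\,dx\,dy$, I bound $\|\nabla v\|_p^p\le |\Omega|\,\|\nabla v\|_\infty^p$ and, since $J$ is supported in $B(0,R_J)$ and has integral $1$, $\tfrac12\iint J(x-y)|v(x)-v(y)|^p\,dx\,dy\le C_\Omega\, [v]_J^p$, where $C_\Omega$ depends only on $|\Omega|$ and $R_J$ (the $y$-integral over the support of $J$ is bounded, and the $x$-integral is over a bounded neighborhood of $\Omega$). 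Together with $\|v\|_p^p\ge c_\Omega'\|v\|_\infty^p$ being false in general—rather, one divides by $\|v\|_p$ and lets $p\to\infty$, using $\|v\|_p\to\|v\|_\infty$ and $(|\Omega|+C_\Omega)^{1/p}\to 1$—this yields
\[
\limsup_{p\to\infty}[\lambda_1(p)]^{1/p}\le \frac{\max\{\|\nabla v\|_\infty,[v]_J\}}{\|v\|_\infty},
\]
and taking the infimum over $v$ gives $\limsup_{p\to\infty}[\lambda_1(p)]^{1/p}\le\Lambda$.

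\emph{Step 2: Compactness and lower bound.} Let $u_p\ge 0$ be a first eigenfunction with $\|u_p\|_p=1$, so $\mathcal{H}_{J,p}(u_p,u_p)=\lambda_1(p)$. From Step 1, $\lambda_1(p)^{1/p}$ is bounded, say by $2\Lambda$ for large $p$, hence $\|\nabla u_p\|_p^p\le \lambda_1(p)$ and $\|\nabla u_p\|_p\le\lambda_1(p)^{1/p}\le 2\Lambda$. For any fixed $q>N$, Hölder gives $\|\nabla u_p\|_q\le |\Omega|^{1/q-1/p}\|\nabla u_p\|_p$, uniformly bounded for $p$ large; by Morrey's embedding $\{u_p\}$ is precompact in $C(\overline\Omega)$, so along a subsequence $p_n\to\infty$ we have $u_{p_n}\to u_\infty$ uniformly and weakly in $W^{1,q}_0(\Omega)$ for every $q$, with $u_\infty\in W^{1,\infty}_0(\Omega)$, $u_\infty\ge 0$, $\|u_\infty\|_\infty=\lim_p\|u_p\|_p=1$ (the latter because $\|u_p\|_\infty\ge\|u_p\|_p=1$ and, by the bound on $\|u_p\|_q$ via Morrey plus $\|u_p\|_p=1$, $\limsup\|u_p\|_\infty\le 1$ as well—this needs the standard $L^p$-to-$L^\infty$ argument). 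To get the lower bound, fix $q>N$; by weak lower semicontinuity and Hölder, $\|\nabla u_\infty\|_q\le\liminf_p\|\nabla u_p\|_q\le\liminf_p |\Omega|^{1/q-1/p}\lambda_1(p)^{1/p}=|\Omega|^{1/q}\liminf_p\lambda_1(p)^{1/p}$; letting $q\to\infty$ gives $\|\nabla u_\infty\|_\infty\le\liminf_p\lambda_1(p)^{1/p}$. Similarly, for the nonlocal term, the bound $\tfrac12\iint J(x-y)|u_p(x)-u_p(y)|^p\,dx\,dy\le\lambda_1(p)$ together with the uniform convergence $u_{p_n}\to u_\infty$ gives, for any $x_0,y_0$ with $x_0-y_0\in\operatorname{supp}(J)$ and $J(x_0-y_0)>0$: evaluating the integral over small balls around $(x_0,y_0)$, taking $p_n$-th roots and passing to the limit, $J(x_0-y_0)^{1/p}\to 1$ and one recovers $|u_\infty(x_0)-u_\infty(y_0)|\le\liminf_p\lambda_1(p)^{1/p}$; taking the supremum, $[u_\infty]_J\le\liminf_p\lambda_1(p)^{1/p}$. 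Hence $\max\{\|\nabla u_\infty\|_\infty,[u_\infty]_J\}\le\liminf_p\lambda_1(p)^{1/p}$, and dividing by $\|u_\infty\|_\infty=1$,
\[
\Lambda\le\frac{\max\{\|\nabla u_\infty\|_\infty,[u_\infty]_J\}}{\|u_\infty\|_\infty}\le\liminf_{p\to\infty}\lambda_1(p)^{1/p}.
\]

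\emph{Step 3: Conclusion.} Combining Steps 1 and 2, $\lim_{p\to\infty}\lambda_1(p)^{1/p}=\Lambda$ and all the inequalities for $u_\infty$ are equalities, so $\Lambda=\max\{\|\nabla u_\infty\|_\infty,[u_\infty]_J\}/\|u_\infty\|_\infty$, which is the claimed identity. The main obstacle is Step 2: one must be careful that the pointwise control on $[u_\infty]_J$ genuinely follows from the $L^p$-type bound on the nonlocal energy — the subtlety is that a single point has measure zero, so one integrates over shrinking neighborhoods of $(x_0,y_0)$, uses continuity of $J$ and of the limit $u_\infty$, and only then takes roots; a parallel care is needed to pin down $\|u_\infty\|_\infty=1$ rather than merely $\le 1$, which uses that $\|u_p\|_p=1$ forces $\|u_p\|_\infty\ge 1$ and the uniform convergence preserves this. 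Everything else is routine: Hölder's inequality to pass from $L^p$ to $L^q$ norms of gradients, Morrey's embedding for compactness, and weak lower semicontinuity of convex functionals.
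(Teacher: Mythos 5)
Your proposal is correct and follows essentially the same route as the paper: the upper bound by testing the Rayleigh quotient with a fixed $W^{1,\infty}_0$ competitor and taking $p$-th roots, and the lower bound via the uniform gradient bound, H\"older, Morrey compactness, and weak lower semicontinuity, with $\|u_\infty\|_\infty=1$ from uniform convergence. The only cosmetic difference is that for the nonlocal term you localize the $p$-energy around a point pair directly, whereas the paper first passes to the limit in the fixed-$q$ functional $\mathcal{H}_{J,q}(u_{p_n},u_{p_n})$ and then sends $q\to\infty$; both rest on the same localization idea the paper uses to establish $\left(\mathcal{H}_{J,p}(u,u)\right)^{1/p}\to\max\{\|\nabla u\|_\infty,[u]_J\}$.
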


    This limit value $\Lambda$ can be characterized in terms of the geometry of the
    domain $\Omega$. In fact, in terms of $R_J$ and the inradius
     (the radius of the largest ball contained in $\Omega$), that is
     \[
	    R_\Omega\coloneqq\max \Big\{\mbox{dist}(x,\partial\Omega)\colon x\in\overline{\Omega}\Big\}.
     \]
 	 For the characterization of $\Lambda$ we introduce $K_\Omega\in\mathbb N\cup \{0\}$ and $b\in[0,R_J)$ as
     \[
        R_\Omega=K_\Omega R_J+b,
     \]
that is, we define $K_\Omega$ as the quotient and $b$ as the remainder of the division between $R_\Omega$ and $R_J$.

We distinguish four cases.
\begin{enumerate}
\item If $b=0$ then
	    \[
	        \Lambda=\max\left\{\dfrac1{R_\Omega},\dfrac{1}{K_\Omega}\right\}.
	    \]
\item If $K_\Omega=0$ or $K_\Omega\ne 0$ with $b\ne0$ and $R_J\le 1$ then
	    \[
	        \Lambda=\max\left\{\dfrac1{R_\Omega},\dfrac{1}{K_\Omega+1}\right\}.
	    \]
	    \item If $K_\Omega>0$, $R_J>1$ and $b\ge 1$ then
		 \[
		 	\Lambda=\frac1{K_\Omega+1}.
		 \]
	\item Finally, if $K_\Omega>0$, $1>b>0$ and $R_J>1$ then
		 \[
		 	\Lambda=\dfrac1{K_\Omega+b}.
		 \]
	    \end{enumerate}
	
	    Remark that in this characterization of the limit of the first eigenvalue
	    appears in a nontrivial way the interplay between the two operators
	    involved. The proof is based in a careful choice of test functions
	    for the infimum that defines $\Lambda$. In these test function one can
	    see the local/nonlocal character of the limit problem.

Concerning the equation verified by the limit we introduce the following operators
$$
\Delta_\infty u= \sum_{i,j=1}^N u_{x_i} u_{x_j} u_{x_i x_j} = \langle D^2 u \nabla u, \nabla u \rangle ,
$$
$$
\mathcal L_{J,\infty} u =\sup_{x-y\in supp(J)} \Big(u(y)-u(x)\Big)+
\inf_{x-y\in supp(J)} \Big(u(y)-u(x)\Big).
$$
Following ideas in \cite{andreu}, this operator can be decomposed as follows
$$
\mathcal L_{J,\infty} u=\mathcal L_{J,\infty}^+ u+\mathcal L_{J,\infty}^- u
$$
where
$$
\mathcal L_{J,\infty}^+ u=\sup_{x-y\in supp(J)} \Big(u(y)-u(x)\Big)
\quad \mbox{and}\quad
\mathcal L_{J,\infty}^- u=\inf_{x-y\in supp(J)} \Big(u(y)-u(x)\Big).
$$

\begin{theorem} \label{teo.viscosa.intro} Any uniform limit of eigenfunctions $u_p$
corresponding to the first eigenvalue $\lambda_1(p)$,
    is a viscosity solution to
    \begin{equation}
      \label{eq:limite.intro}
            \begin{cases}
	            \max\Big\{M_1(u), M_2(u)\Big\}=0 &\text{ in }\Omega,\\
	            u =0 &\text{ on }\partial\Omega,
            \end{cases}
    \end{equation}
    where
    $$
    	    M_1(u(x))\coloneqq\min \Big\{-\Lambda u(x)-\mathcal{L}^-_{J,\infty}u(x),
	        -\mathcal{L}_{J,\infty}u(x), -\mathcal L_{\infty,J}^- u(x)-|\nabla u(x)|
	        \Big\}
	        $$
	        and
	           \begin{align*}
            & M_2(u(x))\coloneqq
             \min\Big \{|\nabla u(x)|-\Lambda u(x),
                -\Delta_\infty u(x),
                |\nabla u(x)|-\mathcal L_{\infty,J}^+u(x),\\
                &\qquad\qquad \qquad \qquad\qquad |\nabla u (x)|+\mathcal L_{\infty,J}^-u(x)
                \Big\}.
     \end{align*}
\end{theorem}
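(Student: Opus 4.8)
The plan is to pass to the limit along the sequence $p_n\to\infty$ directly in the viscosity formulation of the eigenvalue equation. By Theorem~\ref{teo.1.intro} each eigenfunction $u_p$ lies in $C^{1,\alpha}(\overline\Omega)$, hence is a classical — and in particular a viscosity — solution of $-\Delta_p u_p-\Delta_{J,p}u_p=\lambda_1(p)\,u_p^{p-1}$ in $\Omega$, where we take $u_p\ge0$ with $\|u_p\|_p=1$ so that $u_{p_n}\to u_\infty$ uniformly on $\overline\Omega$ and $u_\infty=0$ on $\partial\Omega$ (each $u_p$ vanishing on $\Omega^c$). To check that $u_\infty$ is a viscosity subsolution I would fix $\phi\in C^2$ touching $u_\infty$ strictly from above at an interior point $x_0$ and, by uniform convergence, produce points $x_n\to x_0$ at which $u_{p_n}-\phi$ has a local maximum; there $\nabla u_{p_n}(x_n)=\nabla\phi(x_n)$ and $D^2u_{p_n}(x_n)\le D^2\phi(x_n)$, so that the equation at $x_n$ gives
\[
-\Delta_{p_n}\phi(x_n)\ \le\ \Delta_{J,p_n}u_{p_n}(x_n)+\lambda_1(p_n)\,u_{p_n}(x_n)^{p_n-1},
\]
with $\Delta_{J,p_n}u_{p_n}(x_n)=2\int J(x_n-y)\,|u_{p_n}(x_n)-u_{p_n}(y)|^{p_n-2}\big(u_{p_n}(y)-u_{p_n}(x_n)\big)\,dy$. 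The supersolution property is obtained symmetrically with a test function $\psi$ touching from below.

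The core of the argument is the asymptotic analysis of the three terms as $n\to\infty$. The relevant quantities are the local slope $|\nabla\phi(x_0)|$; the one-sided nonlocal increments
\[
A_n^{+}\coloneqq\sup\{u_{p_n}(y)-u_{p_n}(x_n)\colon x_n-y\in\mbox{supp}(J)\},\qquad
A_n^{-}\coloneqq\inf\{u_{p_n}(y)-u_{p_n}(x_n)\colon x_n-y\in\mbox{supp}(J)\},
\]
which converge to $\mathcal L^{+}_{\infty,J}u_\infty(x_0)$ and $\mathcal L^{-}_{\infty,J}u_\infty(x_0)$; and the zeroth-order scale, governed by $\lambda_1(p_n)^{1/p_n}\to\Lambda$ (Theorem~\ref{thm:limiteInfty.intro}) and $u_{p_n}(x_n)\to u_\infty(x_0)$, so that $\lambda_1(p_n)u_{p_n}(x_n)^{p_n-1}$ behaves to leading exponential order like $(\Lambda u_\infty(x_0))^{p_n-1}$. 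Factoring $\Delta_{J,p_n}u_{p_n}(x_n)$ by $\big(\max\{A_n^{+},-A_n^{-}\}\big)^{p_n-2}$ shows that this term is, to leading order, a signed power of the largest one-sided increment — with the sign recorded by whichever of $\mathcal L^{+}_{\infty,J}u_\infty(x_0)$, $-\mathcal L^{-}_{\infty,J}u_\infty(x_0)$ is larger — while $-\Delta_{p_n}\phi(x_n)$ is governed by $|\nabla\phi|^{p_n-2}$ and, after division by $(p_n-2)|\nabla\phi(x_n)|^{p_n-4}$ when $\nabla\phi(x_0)\ne0$, by $-\Delta_\infty\phi(x_0)$. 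One then splits into the regime in which the gradient scale $|\nabla\phi(x_0)|$ dominates — from which, together with the requirement that the right-hand side not blow up after the $(p-2)|\nabla\phi|^{p-4}$ normalization, one reads off $-\Delta_\infty\phi(x_0)\ge0$ and the comparisons $|\nabla\phi(x_0)|\ge\Lambda\phi(x_0)$, $|\nabla\phi(x_0)|\ge\mathcal L^{+}_{\infty,J}\phi(x_0)$, $|\nabla\phi(x_0)|\ge-\mathcal L^{-}_{\infty,J}\phi(x_0)$, i.e. $M_2(\phi)(x_0)\ge0$ — and the complementary regime in which the nonlocal increment dominates or ties the gradient, where the local term drops out (or is comparable) and dividing by the nonlocal scale yields $M_1(\phi)(x_0)\ge0$; the degenerate subcase $\nabla\phi(x_0)=0$ is handled separately, only the zeroth-order and nonlocal terms surviving. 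Taking the better of the two regimes gives $\max\{M_1(\phi)(x_0),M_2(\phi)(x_0)\}\ge0$, and the mirror computation with $\psi$ gives $M_1(\psi)(x_0)\le0$ and $M_2(\psi)(x_0)\le0$.

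The main difficulty is precisely this competition between the two operators at the level of exponents: $\Delta_p u$ and $\Delta_{J,p}u$ are both $(p-1)$-homogeneous, hence live on the same exponential scale, yet the $\Delta_\infty$ term becomes visible only after the finer $(p-2)|\nabla\phi|^{p-4}$ renormalization, under which the nonlocal and zeroth-order contributions must be controlled simultaneously. Keeping both scales in play at once — and tracking the signs of $A_n^{\pm}$, which is why $\mathcal L^{+}_{\infty,J}$ and $\mathcal L^{-}_{\infty,J}$ enter $M_1$ and $M_2$ with exactly the displayed signs — is the technical heart of the proof; one must also verify that the constant $\Lambda$ furnished by Theorem~\ref{thm:limiteInfty.intro} is the multiplier that makes the limiting inequalities consistent, and check that it is legitimate to require only a \emph{local} maximum of $u_{p_n}-\phi$ at $x_n$ even though $\Delta_{J,p_n}u_{p_n}(x_n)$ depends on values of $u_{p_n}$ far from $x_n$ (here one simply keeps $\Delta_{J,p_n}u_{p_n}(x_n)$ evaluated on $u_{p_n}$ and uses uniform convergence, rather than replacing $u_{p_n}$ by $\phi$ away from $x_n$).
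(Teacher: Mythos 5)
Your overall strategy coincides with the paper's: use that each $u_{p_n}$ solves its eigenvalue equation in the viscosity sense, transfer a $C^2$ touching of $u_\infty$ at $x_0$ to points $x_n\to x_0$, renormalize the three competing $(p-1)$-homogeneous terms, and split into the regime where the nonlocal negative part dominates the local second-order term (giving the $M_1$ inequalities) and the complementary regime where division by $(p_n-2)|\nabla\phi(x_n)|^{p_n-4}$ extracts $-\Delta_\infty$ together with the comparisons against $\Lambda\,u$, $\mathcal L^{\pm}_{J,\infty}$ (giving $M_2$). The convergence of the $(p-1)$-averages of the one-sided increments to $\mathcal L^{\pm}_{J,\infty}$ is exactly the paper's Lemma \ref{lem_CHM}, and the case analysis you outline matches the paper's dichotomy.

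There is, however, a genuine gap in how you launch the argument. You assert that $C^{1,\alpha}(\overline\Omega)$ regularity makes $u_p$ a \emph{classical} solution, and you then use $\nabla u_{p_n}(x_n)=\nabla\phi(x_n)$ together with $D^2u_{p_n}(x_n)\le D^2\phi(x_n)$ to pass from the equation for $u_{p_n}$ to the inequality $-\Delta_{p_n}\phi(x_n)\le \Delta_{J,p_n}u_{p_n}(x_n)+\lambda_1(p_n)u_{p_n}(x_n)^{p_n-1}$. For a degenerate quasilinear equation, $C^{1,\alpha}$ does not yield second derivatives, so this step is not available; the paper instead proves that weak solutions are viscosity solutions (Theorem \ref{thm:wkvs}, following \cite{JLM}) and never differentiates $u_{p_n}$ twice — every derivative, and also the nonlocal integral, is evaluated on the glued function ($\phi$ in $U$, $u_{p_n}$ outside $U$). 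Your proposal to keep $\Delta_{J,p_n}u_{p_n}(x_n)$ evaluated on $u_{p_n}$ itself is not innocuous either: the mixed inequality above does not follow from the viscosity definition (by monotonicity of $t\mapsto|t|^{p-2}t$ the glued function makes the nonlocal term \emph{larger}, which is the wrong direction for the subsolution case), and in the limit it produces inequalities for $\mathcal L^{\pm}_{J,\infty}u_\infty(x_0)$ rather than for the glued test function, which for the entry $|\nabla w|+\mathcal L^{-}_{J,\infty}w$ of $M_2$ is again the wrong monotonicity to recover the stated conclusion. Both issues disappear if you adopt the paper's viscosity framework from the start; the limiting analysis you describe afterwards is then essentially the paper's proof.
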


Notice that the equation $\max\{M_1(u), M_2(u)\}=0 $ is a sort of eigenvalue problem
(the value $\Lambda$ appears there and if $u$ is a viscosity solution then $k\, u$
is also a solution for every $k>0$). In addition, we highlight the local/nonlocal character
of $M_1(u)$ and $M_2(u)$.

    \begin{remark} {\rm Here we assumed that $p\geq 2$. Some of our results
    can be extended to $1<p<2$ but in this case the equation is singular
    (one of the main difficulties that we found is to establish the validity
    of the strong maximum principle for $1<p<2$, see Lemma \ref{lemma:smp}
    for $p\geq 2$). Also, notice that for $1<p<2$ we need to
			be more careful in the definition of
			viscosity solutions, see
			\cite{JJ} for the $p-$Laplace equation.  }				
		\end{remark}
		
		\begin{remark} {\rm
		Our results can be extended to cover the problems
		\begin{equation}\label{eq:EP.999}
	    \begin{cases}
			-\Delta^s_p u-\Delta_{J,p}u =\lambda|u|^{p-2}u &\text{ in } \Omega,\\
			u=0 &\text{ in } \Omega^c.
		\end{cases}
	\end{equation}
	and
	\begin{equation}\label{eq:EP.000}
	    \begin{cases}
			-\Delta_p u-\Delta^s_{p}u =\lambda|u|^{p-2}u &\text{ in } \Omega,\\
			u=0 &\text{ in } \Omega^c.
		\end{cases}
	\end{equation}
	
	In fact, we have
        \[
	        \left[\lambda_1(p)\right]^{\nicefrac1p}\to
	        \Lambda
        \]
        where
        \[
            \Lambda\coloneqq
            \left\{
            \begin{array}{ll}
            \inf\left\{
                \dfrac{\max\left\{[ u]_{s},
                [u]_J\right\}}{\|u\|_\infty}\colon u\in W^{1,\infty}_0(\Omega)
            \right\} \qquad & \mbox{ for } \eqref{eq:EP.999}, \\[10pt]
            \inf\left\{
                \dfrac{\max\left\{\|\nabla u\|_{\infty},
                [u]_s\right\}}{\|u\|_\infty}\colon u\in W^{1,\infty}_0(\Omega)
            \right\} \qquad & \mbox{ for } \eqref{eq:EP.000}.
            \end{array}
            \right.
        \]
       Here, $[u]_s$ is the $s-$H\"older seminorm,
        \[
      [u]_s\coloneqq\sup_{x,y \in \mathbb{R}^N }\left\{\dfrac{|u(x)-u(y)|}{|x-y|^s}\right\}.
    \]

    We present our results for \eqref{eq:EP} since it involves a combination of operators in the extreme cases, a local
    operator and a zero-order one.
		}
		\end{remark}
	
	\begin{remark} {\rm
		If we consider (for two parameters $\alpha, \beta >0$)
		\begin{equation}\label{eq:EP.iii}
	    \begin{cases}
			- \alpha \Delta_p u- \beta \Delta_{J,p}u =\lambda|u|^{p-2}u &\text{ in } \Omega,\\
			u=0 &\text{ in } \Omega^c,
		\end{cases}
	\end{equation}
	one can easily show that the first eigenvalue for this problem, $\lambda_1(p, \alpha, \beta)$
	verifies similar properties as the ones described here for the case $\alpha=\beta =1$.
	Moreover, it holds that
        \[
	   \lim_{\alpha \to 1, \beta \to 0}  \lambda_1(p, \alpha, \beta)= \lambda_1
	   \qquad \mbox{and} \qquad  \lim_{\alpha \to 0, \beta \to 1}
	   \lambda_1(p, \alpha, \beta)= \lambda_{1,J}
        \]
        where $\lambda_1$ is the first eigenvalue for the $p-$Laplacian and
        $\lambda_{1,J}$ is the infimum of the corresponding Raleigh quotient
        for the nonlocal zero order operator.

        Concerning eigenfunctions we have
         \[
	   \lim_{\alpha \to 1, \beta \to 0}  u_1(p, \alpha, \beta)= u_1
        \]
        with $u_1 $ an eigenfunction of the $p-$Laplacian associated to $\lambda_1$. However,
        one can not obtain the existence of the limit $\lim_{\alpha \to 0, \beta \to 1}  u_1(p, \alpha, \beta)$
        using compactness results since in this case there is no uniform bound for $\| \nabla u\|_p$.

   Therefore, one can look at \eqref{eq:EP.iii} as an eigenvalue problem that
   interpolates between a local
    operator and a zero-order one.
		}
		\end{remark}

\section{Some general results}\label{generalresults}%
	
	\subsection{Boundary regularity}
		Let $\Omega$ be a bounded domain in $\mathbb{R}^N$ with $C^{1,\alpha}$ boundary,
		$K,$ $M_0$ be a positive constants and
		$a\colon\overline{\Omega}\times\mathbb{R}\times\mathbb{R}^N\to\mathbb{R}$
		be such that
		\[
			|a(x,z,l)|\le K(1+|l|)^{p+2}
		\]
		for all $(x,z,l)$ in $\partial\Omega\times[-M_0,M_0]\times\mathbb{R}^N.$
		Before setting our regularity result, we introduce our definition of weak solution to
		\begin{equation}\label{eq:reg}
			\begin{cases}
				\mathcal{L}_{J,p}(u)=a(x,u,\nabla u) & \text{ in } \Omega,\\
				u=0&\text{ in } \Omega^c.
			\end{cases}
		\end{equation}
		\begin{definition}
			A weak solution of \eqref{eq:reg} is a function  $u\in W^{1,p}_0(\Omega)$
			such that
			\[
				\mathcal{H}_{J,p}(u,v)=\int_{\Omega}a(x,u,\nabla u)v \, dx\qquad
				\forall v\in W^{1,p}_0(\Omega),
			\]
			where
			\begin{align*}
				\mathcal{H}_{J,p}(u,v)\coloneqq&\int_{\Omega}|\nabla u|^{p-2}u\nabla v dx\\
				&+\int_{\mathbb{R}^N}\int_{\mathbb{R}^N}J(x-y)|u(x)-u(y)|^{p-2}(u(x)-u(y))
				(v(x)-v(y)) dx dy	.	
			\end{align*}
		\end{definition}

		Observe that if $u\in L^\infty(\mathbb{R}^N),$ then
		$\Delta_{J,p}u\in L^\infty(\mathbb{R}^N).$
		Therefore, as a consequence of \cite[Theorem 1]{regularity} we have our
		regularity result.
		
		\begin{theorem}\label{thm:regularity}
			If $u$ is a bounded weak solution of \eqref{eq:reg} with $\|u\|_{L^\infty(\Omega)}\le M_0$
			then there is a positive constant $\alpha$ such that
			$u\in C^{1,\alpha}(\overline{\Omega}).$	
		\end{theorem}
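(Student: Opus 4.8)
The plan is to absorb the nonlocal term into the right-hand side and reduce to the known $C^{1,\alpha}$ regularity theory for the local $p$-Laplacian. First I would extend $u$ by $0$ to all of $\mathbb{R}^N$ (consistent with $u=0$ in $\Omega^c$), so that $\|u\|_{L^\infty(\mathbb{R}^N)}\le M_0$. The key — and essentially only — estimate is that $\Delta_{J,p}u$ is bounded: for a.e. $x\in\mathbb{R}^N$,
\[
|\Delta_{J,p}u(x)|\le 2\int_{\mathbb{R}^N}J(x-y)\,|u(x)-u(y)|^{p-1}\,dy\le 2(2M_0)^{p-1}\int_{\mathbb{R}^N}J(z)\,dz=2(2M_0)^{p-1},
\]
where we used $\int_{\mathbb{R}^N}J=1$. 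Hence $g\coloneqq\Delta_{J,p}u\in L^\infty(\mathbb{R}^N)$ with $\|g\|_{L^\infty(\mathbb{R}^N)}\le C_0\coloneqq 2(2M_0)^{p-1}$.

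Next I would rewrite the weak identity $\mathcal{H}_{J,p}(u,v)=\int_\Omega a(x,u,\nabla u)\,v\,dx$. Using the radial symmetry of $J$, the double-integral term equals $\int_\Omega(-\Delta_{J,p}u)\,v\,dx$ for every $v\in W^{1,p}_0(\Omega)$ (since such $v$ vanish outside $\Omega$), so $u$ is a weak solution of the purely local quasilinear Dirichlet problem
\[
-\Delta_p u=\tilde{a}(x,u,\nabla u)\quad\text{in }\Omega,\qquad u=0\quad\text{on }\partial\Omega,\qquad\tilde{a}(x,z,l)\coloneqq a(x,z,l)+g(x).
\]
Since $(1+|l|)^{p+2}\ge 1$, the new nonlinearity satisfies, on $\partial\Omega\times[-M_0,M_0]\times\mathbb{R}^N$,
\[
|\tilde{a}(x,z,l)|\le K(1+|l|)^{p+2}+C_0\le(K+C_0)(1+|l|)^{p+2},
\]
i.e.\ it obeys the very same structural bound imposed on $a$ with $K$ replaced by $K+C_0$, while the $L^\infty$ bound $\|u\|_{L^\infty(\Omega)}\le M_0$ and the $C^{1,\alpha}$ regularity of $\partial\Omega$ are untouched.

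Finally I would apply the $C^{1,\alpha}$ regularity theory for the $p$-Laplacian with bounded right-hand side — namely \cite[Theorem 1]{regularity} — to the local problem above, which yields $u\in C^{1,\alpha}(\overline{\Omega})$ for some $\alpha\in(0,1)$ and finishes the proof. There is no genuine difficulty here: the whole argument hinges on the uniform bound for $\Delta_{J,p}u$, which is immediate because $u$ is bounded (being supported in the bounded set $\overline{\Omega}$) and $J$ is integrable; once that is in hand the statement is a direct citation of the local theory.
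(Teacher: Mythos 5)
Your proposal is correct and follows essentially the same route as the paper: the paper's entire argument is the observation that $u\in L^\infty(\mathbb{R}^N)$ forces $\Delta_{J,p}u\in L^\infty$, after which the nonlocal term is absorbed into the right-hand side and \cite[Theorem 1]{regularity} is invoked. You simply make the constant $2(2M_0)^{p-1}$ and the verification of the structural bound for $\tilde a$ explicit, which the paper leaves implicit.
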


	\subsection{Comparison principle}
		Given $u,v\in W^{1,p}(\mathbb{R}^N),$ we say that
		\[
			\mathcal{L}_{J,p} u\le\mathcal{L}_{J,p}v \quad \text{in } \Omega,
		\] 	
		in the weak sense, if
		\[
			\mathcal{H}_{J,p}(u,w)\le\mathcal{H}_{J,p}(v,w) \quad\forall
			w\in W^{1,p}_0(\Omega), w\ge0.
		\]
		
		\begin{lemma}[Comparison principle]\label{lem:cp}
			If $u,v\in W^{1,p}(\mathbb{R}^N)$ and
	  		\begin{equation}\label{eq:cp}
				\mathcal{L}_{J,p} u\le\mathcal{L}_{J,p}v \quad \text{in } \Omega,
			\end{equation}
			in the weak sense, and $u\le v$ in $\Omega^c$ then $u\le v$ in $\Omega.$
		\end{lemma}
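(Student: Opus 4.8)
The plan is to prove the comparison principle by testing the weak inequality with the positive part of the difference $w = (u-v)^+$, which is an admissible test function precisely because $u \le v$ on $\Omega^c$ forces $w \in W^{1,p}_0(\Omega)$ (it vanishes outside $\Omega$) and $w \ge 0$. Subtracting the two weak inequalities and using this $w$ gives
\[
\int_\Omega \Bigl(|\nabla u|^{p-2}\nabla u - |\nabla v|^{p-2}\nabla v\Bigr)\cdot \nabla w \, dx + \mathcal{I} \le 0,
\]
where $\mathcal{I}$ is the double integral
\[
\mathcal{I} = \int_{\mathbb{R}^N}\int_{\mathbb{R}^N} J(x-y)\Bigl(|u(x)-u(y)|^{p-2}(u(x)-u(y)) - |v(x)-v(y)|^{p-2}(v(x)-v(y))\Bigr)\bigl(w(x)-w(y)\bigr)\, dx\, dy.
\]

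The key monotonicity fact I would invoke is that the map $a \mapsto |a|^{p-2}a$ is monotone on $\mathbb{R}$ (and on $\mathbb{R}^N$), so that for all vectors (or scalars) $\xi,\eta$,
\[
\bigl(|\xi|^{p-2}\xi - |\eta|^{p-2}\eta\bigr)\cdot(\xi-\eta) \ge 0,
\]
with equality only when $\xi=\eta$. For the local term, on the set $\{u>v\}$ we have $\nabla w = \nabla u - \nabla v$, so the integrand is $\bigl(|\nabla u|^{p-2}\nabla u - |\nabla v|^{p-2}\nabla v\bigr)\cdot(\nabla u - \nabla v) \ge 0$, and on $\{u \le v\}$ we have $\nabla w = 0$; hence the local term is nonnegative. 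For the nonlocal term, write $g(x) = u(x)-v(x)$ and note $w = g^+$; the standard pointwise inequality (see e.g.\ the fractional $p$-Laplacian literature) states that for every pair $(x,y)$,
\[
\Bigl(|u(x)-u(y)|^{p-2}(u(x)-u(y)) - |v(x)-v(y)|^{p-2}(v(x)-v(y))\Bigr)\bigl(g^+(x)-g^+(y)\bigr) \ge 0,
\]
which one checks by a short case analysis on the signs of $g(x)$ and $g(y)$ using the monotonicity of $t\mapsto|t|^{p-2}t$ together with the elementary inequality $(a-b)(a^+ - b^+) \ge (a^+-b^+)^2 \ge 0$ applied to $a=u(x)-u(y)$-type quantities. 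Therefore $\mathcal{I} \ge 0$ as well.

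Combining, the sum of two nonnegative quantities is $\le 0$, so both vanish; in particular $\int_\Omega \bigl(|\nabla u|^{p-2}\nabla u - |\nabla v|^{p-2}\nabla v\bigr)\cdot \nabla w \, dx = 0$. By strict monotonicity of $\xi \mapsto |\xi|^{p-2}\xi$ this forces $\nabla u = \nabla v$ a.e.\ on $\{u > v\}$, i.e.\ $\nabla w = 0$ a.e.\ in $\mathbb{R}^N$; since $w \in W^{1,p}_0(\Omega)$, this yields $w \equiv 0$, that is $u \le v$ in $\Omega$. I expect the main (only mildly delicate) obstacle to be the sign analysis establishing that the nonlocal integrand is pointwise nonnegative after pairing with $w(x)-w(y)$: one must handle the four sign combinations of $g(x),g(y)$ carefully, and in the mixed case (say $g(x)>0 \ge g(y)$) verify that $u(x)-u(y) \ge v(x)-v(y)$ so that monotonicity of $t\mapsto |t|^{p-2}t$ gives the right sign when multiplied by $w(x)-w(y)=g^+(x) \ge 0$. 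Everything else is a direct consequence of the monotonicity of the two operators and the admissibility of the test function, which in turn relies on the boundary hypothesis $u\le v$ on $\Omega^c$.
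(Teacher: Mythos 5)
Your proposal is correct and follows essentially the same route as the paper: both test with $w=(u-v)^+\in W^{1,p}_0(\Omega)$, show the local and nonlocal contributions each have a sign by monotonicity of $t\mapsto|t|^{p-2}t$ (your case analysis for the double integral is exactly the content of the lemma the paper cites from Lindgren--Lindqvist), and conclude $w\equiv 0$ from the vanishing of the gradient term (the paper invokes Simon's inequality where you invoke strict monotonicity). No substantive difference.
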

		\begin{proof}
			By \eqref{eq:cp}, for all $w\in W^{1,p}_0(\Omega),$ $w\ge0$ we have
			that
			\begin{align*}
				0\le& \int_\Omega\left(|\nabla v|^{p-2}\nabla v-
				|\nabla u|^{p-2}\nabla u\right)
				\nabla wdx\\&+\int_{\mathbb{R}^N}\int_{\mathbb{R}^N}J(x-y)
				\mathcal{A}(v,u,x,y)
				\left(w(x)-w(y)\right) dx dy
			\end{align*}
			where
			\[
				\mathcal{A}(v,u,x,y)=|v(x)-v(y)|^{p-2}\left(v(x)-v(y)\right)-
				|u(x)-u(y)|^{p-2}
				\left(u(x)-u(y)\right).
			\]
			Now, we choice $w=\max\{u-v,0\}$ that is nonnegative
			and belongs to $W^{1,p}_0(\Omega)$ by hypothesis.
			Arguing as \cite[Proof of Lemma 9]{LL}, we have
			\[
				\int_{\mathbb{R}^N}\int_{\mathbb{R}^N}J(x-y) \mathcal{A}(v,u,x,y)
				\left(w(x)-w(y)\right) dx dy\le0.
			\]
		
			 On the other hand, by Cauchy-Schwarz and Young inequalities, we
			have
			\[
				 \int_\Omega\left(|\nabla v|^{p-2}\nabla v-|\nabla u|^{p-2}\nabla u\right)
				\nabla wdx\le0.
			\]
			Therefore
			\begin{align*}
				\int_\Omega\left(|\nabla v|^{p-2}\nabla v-|\nabla u|^{p-2}\nabla u\right)
				\nabla w dx&=0,\\
				\int_{\mathbb{R}^N}\int_{\mathbb{R}^N}J(x-y) \mathcal{A}(v,u,x,y)
				\left(w(x)-w(y)\right) dx dy &=0.
			\end{align*}
			Then by \cite[See (2.2)]{simon}, we have that $w\equiv 0$ and therefore
			$v\ge u$ in $\mathbb{R}^N.$
		\end{proof}
		
		\begin{corollary}[Weak maximum principle]
			If $u\in W^{1,p}(\mathbb{R}^N)$ and
	  		\[
				\mathcal{L}_{J,p} u\ge 0 \quad \text{in } \Omega,
			\]
			in the weak sense and $u\ge 0$ in $\Omega^c$ then $u\ge0$ in $\Omega.$
		\end{corollary}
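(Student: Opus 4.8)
The plan is to deduce this directly from the comparison principle (Lemma~\ref{lem:cp}) by a simple substitution, so no new analytic work is needed. First I would take $v = u$ and, in the role of the ``smaller'' function, the constant function $0 \in W^{1,p}(\mathbb{R}^N)$. Since $\mathcal{L}_{J,p}(0) = 0$ in the weak sense (both the local term $\int_\Omega |\nabla 0|^{p-2}\nabla 0 \cdot \nabla w\,dx$ and the nonlocal term vanish identically because $0(x)-0(y)=0$), the hypothesis $\mathcal{L}_{J,p}u \ge 0$ in $\Omega$ in the weak sense is exactly the statement $\mathcal{L}_{J,p}(0) \le \mathcal{L}_{J,p}(u)$ in $\Omega$ in the weak sense. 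Likewise, the hypothesis $u \ge 0$ in $\Omega^c$ says precisely $0 \le u$ in $\Omega^c$.

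Next I would invoke Lemma~\ref{lem:cp} with the pair $(0, u)$ in place of $(u, v)$: all three hypotheses are met, so we conclude $0 \le u$ in $\Omega$, i.e.\ $u \ge 0$ in $\Omega$, which is the desired conclusion.

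There is essentially no obstacle here; the only point worth a sentence is the membership $0 \in W^{1,p}(\mathbb{R}^N)$, which is immediate, so that the comparison lemma genuinely applies with this choice. Thus the proof is a one-line corollary: apply Lemma~\ref{lem:cp} with $v=u$ and the subsolution $0$.
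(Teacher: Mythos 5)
Your proposal is correct and is exactly the intended argument: the paper states this as an immediate corollary of Lemma \ref{lem:cp}, obtained by comparing $u$ with the zero function (note that $\mathcal{L}_{J,p}u\ge 0$ in the weak sense is precisely $\mathcal{H}_{J,p}(u,w)\ge 0=\mathcal{H}_{J,p}(0,w)$ for all nonnegative $w\in W^{1,p}_0(\Omega)$). Nothing further is needed.
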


	\subsection{Viscosity solutions}
		Now we introduce our definition of viscosity solution of
		\begin{equation}\label{eq:vs}
			\begin{cases}
				\mathcal{L}_{J,p}u=f(x) &\text{in }\Omega,\\
				u=0&\text{in }\Omega^c.%
			\end{cases}
		\end{equation}
		Here $f\colon\Omega\to\mathbb{R}$ is a continuous function.

		\begin{definition}
			Let $f\colon\Omega\to\mathbb{R}$ be a continuous function and
			$p\ge2.$ A upper semicontinuous function $u\colon\mathbb{R}^N
			 \to [-\infty,+\infty)$
			is a viscosity subsolution of
			\eqref{eq:vs} if
			\begin{enumerate}[(i)]
				\item $u$ is not identically $-\infty.$
				\item For any open set $U\subset\Omega,$ any $x_0\in U$ and any
					$\phi\in C^2(U)$ such that $\phi(x_0)=u(x_0)$ and $\phi\ge u$ in $U$
					if we let
					\[
						v(x)=
							\begin{cases}
								\phi(x) & \text{if } x\in U,\\
								u(x) & \text{if } x\in U^c,
							\end{cases}
					\]
					we have that $$\mathcal{L}_{J,p}v(x_0)\le f(x_0).$$
				\item $u\le0$ in $\Omega^c.$
			\end{enumerate}

			Similarly, a lower semicontinuous function $u\colon\mathbb{R}^N
			\to(-\infty,+\infty]$
			is a viscosity supersolution of \eqref{eq:vs} if
			\begin{enumerate}[(i)]
				\item $u$ is not identically $+\infty.$
				\item For any open set $U\subset\Omega,$ any $x_0\in U$ and any
					$\psi\in C^2(U)$ such that $\psi(x_0)=u(x_0)$ and $\psi\le u$ in $U$
					if we let
					\[
						w(x)=
							\begin{cases}
								\psi(x) & \text{if } x\in U,\\
								u(x) & \text{if } x\in U^c,
							\end{cases}
					\]
					we have that $$\mathcal{L}_{J,p}w(x_0)\ge f(x_0).$$
				\item $u\ge0$ in $\Omega^c.$
			\end{enumerate}

			Finally, a continuous function $u$ is a viscosity solution of \eqref{eq:vs} if
			it is both
			a viscosity subsolution and a viscosity supersolution.
		\end{definition}

		Observe that if $u$ is a bounded weak solution of \eqref{eq:vs} with
		$f\in L^\infty(\Omega),$ from our previous regularity result, Theorem \ref{thm:regularity},
		we have that $u$ is continuous in $\mathbb{R}^N.$
		Then it makes sense to ask if a weak solution is also a viscosity solution. In fact,
		following \cite{JLM} (see also \cite[Section 4]{LL} for a nonlocal counterpart),
		we can show the following result.
		
		\begin{theorem}\label{thm:wkvs}
			Let $f\in C(\overline{\Omega}).$
			 If $u\in W^{1,p}(\Omega)\cap L^{\infty}(\Omega)$
			is a weak solution of \eqref{eq:vs} then $u$ is a viscosity solution  of
			\eqref{eq:vs} .
		\end{theorem}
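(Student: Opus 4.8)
The plan is to argue by contradiction following the classical scheme of Juutinen–Lindqvist–Manfredi, adapted to handle the extra zero‑order nonlocal term. I will only write the argument for the subsolution property (the supersolution case being entirely symmetric). Suppose $u\in W^{1,p}(\Omega)\cap L^\infty(\Omega)$ is a weak solution but fails to be a viscosity subsolution. Then there exist an open set $U\subset\Omega$, a point $x_0\in U$, and $\phi\in C^2(U)$ touching $u$ from above at $x_0$ (i.e. $\phi(x_0)=u(x_0)$, $\phi>u$ on $U\setminus\{x_0\}$, after a standard strict‑touching reduction by subtracting a quartic) such that, with $v=\phi$ on $U$ and $v=u$ off $U$, one has $\mathcal L_{J,p}v(x_0)>f(x_0)$. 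The goal is to contradict the weak formulation.

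The key steps are as follows. First I would separate the two pieces of $\mathcal L_{J,p}$: the nonlocal term $-\Delta_{J,p}v$ is, for $v\in L^\infty(\mathbb R^N)$, a genuinely \emph{continuous} function of $x$ near $x_0$ (this is already noted in the excerpt), so the sign information $\mathcal L_{J,p}v(x_0)>f(x_0)$ persists in a small ball: there is $r>0$ and $\theta>0$ with
\[
-\Delta_p v(x) -\Delta_{J,p}v(x) \ge f(x)+\theta \qquad\text{for all } x\in B_r(x_0),
\]
where $-\Delta_p v$ is evaluated classically (it is continuous since $\phi\in C^2$); here one must also check $\nabla\phi(x_0)\neq 0$ is \emph{not} needed because $p\ge2$ makes $|\nabla\phi|^{p-2}\nabla\phi$ a $C^1$ expression, so $-\Delta_p\phi$ is continuous through critical points. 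Second, I would show that the classical pointwise inequality upgrades to a \emph{weak} inequality on $B_r(x_0)$: for every nonnegative $w\in C_c^\infty(B_r(x_0))$,
\[
\int_{B_r(x_0)}|\nabla v|^{p-2}\nabla v\cdot\nabla w\,dx
+\int_{\mathbb R^N}\!\int_{\mathbb R^N} J(x-y)|v(x)-v(y)|^{p-2}(v(x)-v(y))(w(x)-w(y))\,dx\,dy
\ \ge\ \int_{B_r(x_0)}(f+\theta)\,w\,dx,
\]
i.e. $\mathcal L_{J,p}v\ge f+\theta$ in $B_r(x_0)$ in the weak sense; this is just integration by parts against the continuous classical expression for the local part, and is immediate for the nonlocal part. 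Third, using $\phi>u$ on $\partial B_r(x_0)$, pick $m:=\min_{\partial B_r(x_0)}(\phi-u)>0$ and set $\tilde v:=\min\{v,\ u+m/2\}$. Then $\tilde v=u$ near $\partial B_r(x_0)$, so $w:=(\tilde v-u)_+$ is a legitimate nonnegative test function supported compactly in $B_r(x_0)$, and $w$ is not identically zero (it is positive near $x_0$ since $\phi(x_0)=u(x_0)$ and $\phi$ is continuous, after perturbing $\phi$ downward by a tiny constant if necessary so that $\{v<u+m/2\}$ still contains a neighbourhood of $x_0$). Fourth, I would use $w$ in both the weak inequality for $v$ and the weak equation for $u$ (the latter on all of $\mathbb R^N$), subtract, and invoke the monotonicity inequalities for the $p$‑Laplacian and for the nonlocal $p$‑Laplacian — exactly the ones used in the proof of Lemma \ref{lem:cp} (Cauchy–Schwarz/Young for the local term, and the argument from \cite[Proof of Lemma 9]{LL} for the nonlocal term) — to conclude that the difference of the bilinear forms $\mathcal H_{J,p}(v,w)-\mathcal H_{J,p}(u,w)$ has a sign that forces $0\ge\int (f+\theta)w-\int f w=\theta\int w>0$, a contradiction.

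The main obstacle I anticipate is the interaction of the two operators in the comparison step: one must be careful that the \emph{global} nonlocal term for $v$ (which sees $v=u$ outside $U$) can be matched against the global nonlocal term for $u$ using the test function $w$ that lives only inside $B_r(x_0)$, so that the monotonicity inequality $\int\!\!\int J\,(\mathcal A(v,v,\cdot)-\mathcal A(u,u,\cdot))(w(x)-w(y))\le 0$ applies with the correct sign — this is where the ordering $v\ge u$ on the support of $w$ and $v=u$ off $U$ must be combined, precisely as in Lemma \ref{lem:cp}. A secondary technical point is the reduction to strict touching and the perturbation ensuring $w\not\equiv 0$ while keeping $w$ admissible; both are standard but need to be stated carefully. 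Everything else — continuity of $\Delta_{J,p}v$, the integration‑by‑parts upgrade, and the $p\ge2$ smoothness of the $p$‑Laplacian nonlinearity — is routine.
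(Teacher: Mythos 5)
Your route (a direct adaptation of the Juutinen--Lindqvist--Manfredi contradiction argument, keeping the nonlocal term inside the bilinear form) is genuinely different from the paper's, and it has a gap exactly at the point you flag as ``the main obstacle''. First, the Step 3 construction is wrong as written: with $m=\min_{\partial B_r(x_0)}(\phi-u)>0$ and $\tilde v=\min\{v,\,u+m/2\}$, near $\partial B_r(x_0)$ one has $v-u\ge m>m/2$, hence $\tilde v=u+m/2$ and $w=(\tilde v-u)_+\equiv m/2$ there; so $w$ is \emph{not} compactly supported in $B_r(x_0)$ and is not admissible for the weak inequality you established on $B_r(x_0)$. The standard move is the opposite one: shift the strict supersolution down, $\tilde v=v-m/2$, and take $w=(u-\tilde v)_+$ \emph{restricted to} $B_r(x_0)$. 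Second, and more seriously, with that corrected $w$ the nonlocal monotonicity does not apply ``precisely as in Lemma~\ref{lem:cp}''. In Lemma~\ref{lem:cp} the test function is the global positive part $(u-v)_+$, which vanishes on $\Omega^c$ because the ordering is assumed on all of $\Omega^c$. Here the global positive part $(u-\tilde v)_+=(u-v+m/2)_+$ equals $m/2$ on all of $U^c$ (where $v=u$), so you must cut it off by $\chi_{B_r(x_0)}$, and then the pointwise sign inequality fails for pairs with $x\in B_r(x_0)$, $w(x)>0$, and $y\notin B_r(x_0)$ with $v(y)=u(y)$: there $|v(x)-v(y)|^{p-2}(v(x)-v(y))-|u(x)-u(y)|^{p-2}(u(x)-u(y))>0$ while $w(x)-w(y)=w(x)>0$. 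The structural reason is that comparison for $\Delta_{J,p}$ on $B_r(x_0)$ needs the ordering on the whole nonlocal complement $B_r(x_0)^c$, and subtracting the constant $m/2$ destroys it outside $U$. The argument is salvageable, but only with an extra quantitative step you do not supply: on the bad pairs the bracket is positive only when both $v(x)-u(x)$ and $v(y)-u(y)$ lie in $[0,m/2)$, so the offending cross terms are bounded by $C\,m\int w$, which is beaten by the good term $\theta\int w$ once $r$ (hence $m$) is small. Without either the corrected test function or this estimate, Step 4 does not close.

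For comparison, the paper's proof sidesteps all of this: since $u$ is bounded and continuous, $x\mapsto\Delta_{J,p}u(x)$ is a continuous function, so one rewrites \eqref{eq:vs} as $-\Delta_p u=f(x)+\Delta_{J,p}u(x)$ with continuous right-hand side and invokes the purely local equivalence of weak and viscosity solutions from \cite{JLM}; the viscosity inequality for the full operator $\mathcal{L}_{J,p}$, which in the paper's definition involves $\Delta_{J,p}v(x_0)$ for the glued test function $v$, then follows from the pointwise monotonicity $\Delta_{J,p}v(x_0)\ge\Delta_{J,p}u(x_0)$ (resp.\ $\le$), valid because $v\ge u$ with $v(x_0)=u(x_0)$. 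Your approach, once repaired as above, buys a self-contained proof that does not cite \cite{JLM} as a black box; the paper's buys brevity and avoids the delicate localization of the nonlocal comparison.
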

		
		\begin{proof} Once we know that $u$ is continuous and write the problem as
		$$
		\begin{cases}
			-\Delta^s_p u =\Delta_{J,p}u +\lambda|u|^{p-2}u &\text{ in } \Omega,\\
			u=0 &\text{ in } \Omega^c,
		\end{cases}
		$$
		the result follows exactly as in \cite{JLM} (remark that the right hand side is continuous
		as a function of $x\in \Omega$).
		\end{proof}
		
		To conclude, we set a strong maximum principle for viscosity supersolution.
		
		\begin{lemma}[Strong maximum principle]\label{lemma:smp}
			Let $p\ge2.$ If $u$ is a viscosity supersolution of
			\begin{equation}\label{eq:smpvs}
				\begin{cases}
					\mathcal{L}_{J,p} u=0 & \text{in }\Omega,\\
					u=0 &\text{in }\Omega^c,
				\end{cases}				
			\end{equation}
				then $u=0$ in $\mathbb{R}^N$  or $u>0$ in  $\Omega.$
		\end{lemma}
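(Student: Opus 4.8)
The plan is to argue by contradiction: suppose $u$ is a viscosity supersolution of \eqref{eq:smpvs}, $u\geq 0$ in $\mathbb{R}^N$ (this follows from the weak maximum principle once we know $u\ge 0$ in $\Omega^c$ and $\mathcal{L}_{J,p}u\ge 0$ in the viscosity sense, or directly from the definition of supersolution tested against constants touching from below), $u\not\equiv 0$, and yet $u(x_0)=0$ for some $x_0\in\Omega$. I would first exploit the \emph{nonlocal} term. Since $u\ge 0$ everywhere and $u(x_0)=0$, the function $\psi\equiv 0$ is a valid test function touching $u$ from below at $x_0$ on a small ball $U=B(x_0,r)\subset\Omega$: the perturbed function $w$ equals $0$ on $U$ and equals $u\ge0$ outside. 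Plugging $w$ into the supersolution inequality at $x_0$ gives
\[
-\Delta_p w(x_0)-2\int_{\mathbb{R}^N}J(x_0-y)\,|w(x_0)-w(y)|^{p-2}\bigl(w(y)-w(x_0)\bigr)\,dy\ \ge\ 0.
\]
Now $-\Delta_p w(x_0)=0$ because $w\equiv 0$ near $x_0$ (so $\nabla w=0$ and $D^2w=0$ there), while the nonlocal integral equals $2\int J(x_0-y)u(y)^{p-1}\,dy\ge 0$, with a \emph{strict} sign unless $u\equiv 0$ on the support of $J(x_0-\cdot)$, i.e.\ on $B(x_0,R_J)\cap\mathbb{R}^N$. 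Hence the inequality forces $u\equiv 0$ on $B(x_0,R_J)$ (intersected with $\Omega$, and recall $u\ge0$ in $\Omega^c$ so this is automatic there).

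Next I would propagate this zero set. The key observation is that the argument just given applies at \emph{every} point of the zero set $Z=\{x\in\Omega:\ u(x)=0\}$, since at any such point $\psi\equiv0$ is again an admissible test function from below. Therefore $Z$ is relatively open in $\Omega$: if $x_0\in Z$ then $B(x_0,R_J)\cap\Omega\subset Z$. But $Z$ is also relatively closed in $\Omega$ by lower semicontinuity of $u$ together with $u\ge0$ (the set where a nonnegative lsc function vanishes is closed). Since $\Omega$ is connected, $Z$ is either empty or all of $\Omega$; combined with $u=0$ in $\Omega^c$, either $u\equiv 0$ in $\mathbb{R}^N$ or $u>0$ in $\Omega$. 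Note this part does not even use the local operator — the nonlocal term alone gives a clean "nonlocal strong maximum principle" with propagation radius $R_J$.

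The technical point to be careful about — and the main obstacle — is the rigorous handling of the test function $w$ at the touching point when $u$ is merely lower semicontinuous and possibly not $C^2$ near $x_0$: one must check that $\psi\equiv 0$ genuinely satisfies $\psi(x_0)=u(x_0)=0$ and $\psi\le u$ on $U$, which is immediate, but one must also confirm that the resulting $w$ lies in the admissible class and that $\mathcal{L}_{J,p}w(x_0)$ is well-defined, which it is since $w$ is bounded (as $u$ is, being a viscosity supersolution that is continuous by Theorem~\ref{thm:wkvs} in the relevant cases, or at worst we work with the truncation) and $C^2$ near $x_0$. One also needs $u$ to be locally integrable against $J$ near $x_0$, guaranteed by $u\in L^\infty_{loc}$ or by the standing assumption that eigenfunctions are bounded. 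For the case $p\geq 2$ the local term $-\Delta_p w(x_0)$ vanishes unambiguously (no singularity issues at $\nabla w=0$ in the viscosity sense for $p\ge2$), which is exactly why the statement is restricted to $p\ge2$ as flagged in the first Remark. With these checks in place the contradiction is complete.
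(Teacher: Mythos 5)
Your argument is correct and is essentially the paper's own proof: test with $\psi\equiv 0$ at a zero point of $u$, observe that the local term vanishes while the nonlocal term forces $\int J(x_0-y)\,u(y)^{p-1}\,dy=0$ and hence $u\equiv 0$ on $B(x_0,R_J)$ (after shrinking the test ball), and then propagate by connectedness of $\Omega$. Your open-and-closed formulation of the propagation step is just a cleaner phrasing of the paper's covering argument.
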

		\begin{proof}
			 Without loss of generality we can assume $u\ge0$ in $\mathbb{R^N}.$
			 Suppose that there is $x_0\in\Omega$ such that $u(x_0)=0.$
			 Given $\varepsilon >0$ such that $\varepsilon<R_J$ and  $B(x_0,\varepsilon)\subset\Omega,$ we take
			\[
					w(x)=
						\begin{cases}
								\psi(x) & \text{if } x\in B(x_0,\varepsilon) ,\\
								u(x) & \text{if } x\in B(x_0,\varepsilon)^c.
						\end{cases}
			\]
			 Observe that $w(x)\le u(x)$ for all $x\in\mathbb{R}^N.$
			 Then, using that $u$ is a viscosity
			 supersolution of \eqref{eq:smpvs} we have that
			 \[
			 	0\le \mathcal{L}_{J,p}w(x_0)\le -\int_{\mathbb{R}^N}J(x_0-y)w(y)^{p-1} dy.
			 \]
			Since $J$ is nonnegative, we get
			$w(x)=0$ for all $x\in\{x_0\}-\text{supp}(J).$ Therefore
			$u(x)=0$ in $B(x_0,R_J)\setminus B(x_0,\varepsilon).$ Moreover, as $\varepsilon$
			is arbitrary small, we get $u(x)=0$ in $B(x_0,R_J).$
			
			If we repeat this procedure, since $\Omega$ is a bounded connected
			domain, we can obtain an open cover $\{U_n\}_{n=1}^m$ of $\Omega$ such
			that $u=0$ in $U_n$ for all $n\in\{1,\dots,m\},$ that is $u=0$ in $\Omega.$
		\end{proof}

\section{Eigenvalues}\label{eigenvalues}
    We start this section by showing that any eigenfunction is bounded.

    \begin{lemma}\label{lemma:be}
          If $u$ is an eigenfunction corresponding to the eigenvalue $\lambda$
          then $u\in L^\infty(\Omega).$
    \end{lemma}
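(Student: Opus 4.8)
The plan is to reduce problem \eqref{eq:EP} to a purely local differential inequality, for which $L^\infty$ bounds for eigenfunctions are classical, and then to run a Moser iteration. The crucial observation is that the nonlocal part of $\mathcal{H}_{J,p}$ has a favourable sign when tested against a monotone function of $u$: if $f\colon\mathbb{R}\to\mathbb{R}$ is Lipschitz, nondecreasing and $f(0)=0$, then $f(u)\in W^{1,p}_0(\Omega)$ and, since $u(x)-u(y)$ and $f(u(x))-f(u(y))$ always have the same sign,
\[
\int_{\mathbb{R}^N}\int_{\mathbb{R}^N}J(x-y)\,|u(x)-u(y)|^{p-2}(u(x)-u(y))\,(f(u(x))-f(u(y)))\,dx\,dy\ \ge\ 0.
\]
Taking $v=f(u)$ in the weak formulation of \eqref{eq:EP} thus gives
\[
\int_\Omega |\nabla u|^{p-2}\nabla u\cdot\nabla f(u)\,dx\ \le\ \mathcal{H}_{J,p}(u,f(u))\ =\ \lambda\int_\Omega |u|^{p-2}u\,f(u)\,dx,
\]
which is exactly the inequality one exploits for eigenfunctions of the local $p$-Laplacian; the nonlocal operator has dropped out.

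Next I would specialize to the truncated powers $f(t)=t\,\min\{|t|,M\}^{p(\beta-1)}$ with $M>0$ and $\beta\ge1$ fixed. Such an $f$ is admissible, and one checks directly that $f'(t)\ge\min\{|t|,M\}^{p(\beta-1)}$, while $w\coloneqq|u|\,\min\{|u|,M\}^{\beta-1}\in W^{1,p}_0(\Omega)$ satisfies $|\nabla w|^p\le\beta^p\,\min\{|u|,M\}^{p(\beta-1)}\,|\nabla u|^p$ and $|w|^p=|u|^p\,\min\{|u|,M\}^{p(\beta-1)}$. Plugging these bounds into the displayed inequality and using the Sobolev embedding $W^{1,p}_0(\Omega)\hookrightarrow L^{p^*}(\Omega)$, $p^*=Np/(N-p)$ when $p<N$ (when $p\ge N$ one uses $W^{1,p}_0(\Omega)\hookrightarrow L^{q}(\Omega)$ for a fixed large $q$, or, for $p>N$, one is already done by Morrey's embedding), one arrives at
\[
\|w\|_{L^{p^*}(\Omega)}^{p}\ \le\ C\,\beta^{p}\,\lambda\,\|w\|_{L^{p}(\Omega)}^{p},\qquad C=C(N,p).
\]
Letting $M\to\infty$ by monotone convergence this becomes $\|u\|_{L^{\beta p^*}(\Omega)}\le (C\lambda)^{1/(\beta p)}\,\beta^{1/\beta}\,\|u\|_{L^{\beta p}(\Omega)}$ whenever the right-hand side is finite. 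Iterating with $\beta_n=(p^*/p)^{\,n}$ and starting from $\beta_0=1$ (legitimate since $u\in L^{p^*}(\Omega)$) gives
\[
\|u\|_{L^{\beta_n p}(\Omega)}\ \le\ \Big(\prod_{k=0}^{n-1}(C\lambda)^{1/(\beta_k p)}\,\beta_k^{1/\beta_k}\Big)\,\|u\|_{L^{p}(\Omega)},
\]
and because $\sum_k \beta_k^{-1}<\infty$ and $\sum_k \beta_k^{-1}\log\beta_k<\infty$ the product converges; letting $n\to\infty$ and recalling that $|\Omega|<\infty$ yields $u\in L^\infty(\Omega)$ with an explicit estimate for $\|u\|_{L^\infty(\Omega)}$ in terms of $\|u\|_{L^p(\Omega)}$, $\lambda$, $N$ and $p$.

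Everything after the first paragraph is completely routine: it is the classical Moser iteration for the $p$-Laplacian eigenvalue problem (one could equally use the De Giorgi--Stampacchia scheme with the truncations $v=(|u|-k)^+\mathrm{sgn}(u)$, which are again monotone in $u$). The only step that uses the structure of $\mathcal{L}_{J,p}$ is the sign of the nonlocal bilinear form under monotone test functions, and this is the one point I would isolate carefully; after it the presence of the nonlocal operator is irrelevant. The remaining mild technical care concerns the truncation parameter $M$, which guarantees that every test function lies in $W^{1,p}_0(\Omega)$ and that all integrals are finite before one passes to the limits $M\to\infty$ and $n\to\infty$.
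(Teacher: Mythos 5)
Your proof is correct, and its one structural ingredient is exactly the paper's: since the test function is a nondecreasing function of $u$ vanishing at $0$, the double integral $\iint J(x-y)|u(x)-u(y)|^{p-2}(u(x)-u(y))(f(u(x))-f(u(y)))\,dx\,dy$ is nonnegative and can be discarded, so the eigenfunction satisfies the purely local inequality $\int_\Omega f'(u)|\nabla u|^p\,dx\le\lambda\int_\Omega|u|^{p-2}u\,f(u)\,dx$. The paper does the same thing but with the De Giorgi--Stampacchia truncations $w_n=(u-(1-2^{-n}))_+$ (following Franzina--Palatucci), obtaining $\int_\Omega|\nabla w_{n+1}|^p\le\lambda\int_\Omega|u|^{p-2}u\,w_{n+1}$ and then running the level-set iteration, which yields the dichotomy ``$\|u_+\|_p\le\delta$ implies $\|u_+\|_\infty\le1$'' and concludes by homogeneity; you instead run a Moser iteration with the truncated powers $f(t)=t\min\{|t|,M\}^{p(\beta-1)}$. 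Both routes are standard once the nonlocal term has been killed, and your version has the minor advantage of producing an explicit bound $\|u\|_\infty\le C(N,p,\lambda,|\Omega|)\|u\|_p$ rather than the smallness dichotomy. Your treatment of the admissibility of the test functions (Lipschitz truncation in $M$, then $M\to\infty$), of the starting point $u\in L^{p^*}$ of the iteration, and of the cases $p=N$ and $p>N$ (fixed large $q$, respectively Morrey) is all in order and matches the paper's separate dispatch of $p>N$ by Sobolev embedding.
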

    \begin{proof}
	    If $p>N,$ by the Sobolev embedding theorem we have that $u\in L^\infty(\Omega).$
	    To extend the result to the case $1<p\le N,$ we will follow ideas from \cite{FP}.
	
	    Observe that it is enough to prove that $u_+\coloneqq
	    \max\{u,0\}\in L^{\infty}(\Omega)$
	    since $-u$ is also an eigenfunction corresponding to $\lambda.$ In fact,
	    since \eqref{eq:EP} is homogeneous, it is enough to prove that
	    \[
	       \|u_+\|_\infty\coloneqq \|u_+\|_{L^{\infty}(\Omega)}\le 1\quad\text{ if }
	        \|u_+\|_{p}\le \delta
	    \]
	    where $\delta>0$ we will determined.
	
	    For all $n\in\mathbb{N}$ we define
	    \[
	        w_n(x)\coloneqq (u-(1-2^{-n}))_+.
	    \]
	
	    Observe that $w_n\in W^{1,p}_0(\Omega).$  Then using that
	        \begin{itemize}
	            \item $|\nabla w_n(x)|^p=|\nabla u(x)|^{p-2}\nabla u(x)\nabla w_n(x)$
	            a.e. in $\mathbb{R}^N,$
	            \item For any $v\in W^{1,p}_0(\Omega),$
	             \[
                |v(x)-v(y)|^{p-2}(v(x)-v(y))(v_+(x)-v_+(y))\ge|v_+(x)-v_+(y)|^p\ge0,
                \]
	            \item $u$ is an eigenfunction corresponding to $\lambda,$
            \end{itemize}
             we have that
            \[
                \int_{\Omega} |\nabla w_{n+1}|^p dx\le
                \mathcal{H}_{J,p}(u,w_{n+1})=
                \lambda\int_{\Omega}|u|^{p-2}uw_{n+1} dx.
            \]
	        The rest of the proof is entirely similar to that of \cite[Theorem 3.2]{FP}.
    \end{proof}

     Thus, by Lemma \ref{lemma:be} and Theorem \ref{thm:regularity} we get

     \begin{corollary}\label{cor:regeigen}
	       If $u$ is an eigenfunction corresponding to the eigenvalue $\lambda$
          then $u\in C^{1,\alpha}(\overline{\Omega})$ for some $\alpha\in(0,1).$
    \end{corollary}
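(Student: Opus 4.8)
The plan is to obtain this as an immediate consequence of the two preceding results, Lemma~\ref{lemma:be} and Theorem~\ref{thm:regularity}. First I would note that if $u$ is an eigenfunction associated with $\lambda$, then by Lemma~\ref{lemma:be} we have $u\in L^\infty(\Omega)$; extending $u$ by zero outside $\Omega$ it also lies in $L^\infty(\mathbb R^N)$, so $\Delta_{J,p}u\in L^\infty(\mathbb R^N)$, as already observed above. Put $M_0:=\|u\|_{L^\infty(\Omega)}$.

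Next I would view the eigenvalue equation as an instance of~\eqref{eq:reg}: take $a(x,z,l):=\lambda|z|^{p-2}z$, which is independent of $l$. Then the defining identity of an eigenfunction, $\mathcal{H}_{J,p}(u,v)=\lambda\int_\Omega|u|^{p-2}uv\,dx$ for all $v\in W^{1,p}_0(\Omega)$, says exactly that $u$ is a bounded weak solution of~\eqref{eq:reg} for this $a$, with $\|u\|_{L^\infty(\Omega)}\le M_0$. The only hypothesis of Theorem~\ref{thm:regularity} that requires checking is the structural bound $|a(x,z,l)|\le K(1+|l|)^{p+2}$ on $\partial\Omega\times[-M_0,M_0]\times\mathbb R^N$; since $a$ does not depend on $l$ and $|z|\le M_0$ there, one has $|a(x,z,l)|=|\lambda|\,|z|^{p-1}\le|\lambda|M_0^{p-1}$, so the bound holds with $K:=|\lambda|M_0^{p-1}$.

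Having verified the hypotheses, Theorem~\ref{thm:regularity} applied to $u$ yields $u\in C^{1,\alpha}(\overline{\Omega})$ for some $\alpha\in(0,1)$, which is the assertion. I do not expect any genuine obstacle here: the only content beyond bookkeeping is that the boundedness of eigenfunctions provided by Lemma~\ref{lemma:be} turns the right-hand side $\lambda|u|^{p-2}u$ into an $L^\infty$ function of $x$ alone, after which the external regularity result quoted as Theorem~\ref{thm:regularity} (from \cite{regularity}) applies directly.
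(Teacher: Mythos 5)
Your proposal is correct and follows exactly the paper's route: the paper derives this corollary in one line as an immediate consequence of Lemma~\ref{lemma:be} (boundedness of eigenfunctions) combined with Theorem~\ref{thm:regularity}, which is precisely what you do, merely spelling out the routine verification that $a(x,z,l)=\lambda|z|^{p-2}z$ satisfies the structural bound.
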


    Moreover, from Corollary \ref{cor:regeigen} and Theorem \ref{thm:wkvs} we obtain
    \begin{corollary}\label{cor:eigenvisc}
	       If $p\ge 2$ and $u$ is an eigenfunction corresponding to the eigenvalue $\lambda$
          then $u$ is a viscosity solution of \eqref{eq:EP}.
    \end{corollary}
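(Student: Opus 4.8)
The plan is to recognize that the eigenvalue equation \eqref{eq:EP} is the particular instance of \eqref{eq:vs} obtained by taking $f(x)=\lambda|u(x)|^{p-2}u(x)$, and then to invoke Theorem \ref{thm:wkvs}. The only hypothesis of that theorem that is not automatic is that the right-hand side be continuous on $\overline{\Omega}$; the bulk of the argument therefore amounts to checking this, which is already packaged in the preceding results.

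First I would use Lemma \ref{lemma:be} to obtain $u\in L^\infty(\Omega)$, and then Corollary \ref{cor:regeigen} (that is, Theorem \ref{thm:regularity} applied with $a(x,z,l)=\lambda|z|^{p-2}z$, which satisfies the required growth bound on $\partial\Omega\times[-M_0,M_0]\times\mathbb{R}^N$ once $u$ is known to be bounded) to conclude $u\in C^{1,\alpha}(\overline{\Omega})$ for some $\alpha\in(0,1)$. In particular $u$ is continuous on $\overline{\Omega}$. Since $p\ge2$, the map $t\mapsto|t|^{p-2}t$ is continuous on $\mathbb{R}$, so $f(x)=\lambda|u(x)|^{p-2}u(x)$ is a continuous function on $\overline{\Omega}$.

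Next, $u$ is by definition a weak solution of \eqref{eq:EP}, hence a bounded weak solution of \eqref{eq:vs} with the continuous right-hand side $f$ just identified, and $u\in W^{1,p}(\Omega)\cap L^\infty(\Omega)$. Theorem \ref{thm:wkvs} then applies verbatim and yields that $u$ is a viscosity solution of \eqref{eq:vs}, which is exactly the statement that $u$ is a viscosity solution of \eqref{eq:EP}.

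There is no substantial obstacle: the two genuinely nontrivial inputs, namely the $C^{1,\alpha}$ regularity up to the boundary (Theorem \ref{thm:regularity}) and the weak-implies-viscosity passage (Theorem \ref{thm:wkvs}), have already been established. The only point to keep in mind is the standing restriction $p\ge2$, which is precisely what makes the notion of viscosity solution used in Theorem \ref{thm:wkvs} available here; for $1<p<2$ one would first need to adopt a suitably modified definition, as indicated in the Remark following Theorem \ref{teo.viscosa.intro}.
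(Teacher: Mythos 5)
Your argument is exactly the paper's: the corollary is obtained by combining Corollary \ref{cor:regeigen} (boundedness via Lemma \ref{lemma:be} plus $C^{1,\alpha}$ regularity, so that $f(x)=\lambda|u(x)|^{p-2}u(x)$ is continuous for $p\ge 2$) with Theorem \ref{thm:wkvs}. The proposal is correct and follows the same route, with the continuity check of the right-hand side made slightly more explicit than in the paper.
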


    \subsection{The first eigenvalue.}
                First we show that if $u$ is an eigenfunction corresponding to
        $\lambda_1(p)$ then $u$ has constant sign.

        \begin{lemma}\label{lemma:constantsign}
            Let $p\ge 2.$ If  $u$ is an eigenfunction corresponding to
            $\lambda_1(p)$ then $u$ has constant sign.
        \end{lemma}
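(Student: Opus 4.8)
The plan is to show that an eigenfunction $u$ associated with $\lambda_1(p)$ cannot change sign, by exploiting the variational characterization of $\lambda_1(p)$ together with the structure of the energy $\mathcal{H}_{J,p}$. First I would observe the elementary pointwise inequality
\[
|a-b|^{p-2}(a-b)\big((a_+-b_+)-(a_--b_-)\big)\ge \big||a_+-b_+|^{p}+|a_--b_-|^{p}\big)\big|,
\]
more precisely that, writing $a=a_+-a_-$ and $b=b_+-b_-$, one has
\[
|a-b|^{p-2}(a-b)(a_+-b_+)\ge |a_+-b_+|^{p}
\quad\text{and}\quad
|a-b|^{p-2}(a-b)(a_--b_-)\le -|a_--b_-|^{p},
\]
which is the nonlocal analogue of the trivial identity $|\nabla u|^{p-2}\nabla u\cdot\nabla u_\pm=|\nabla u_\pm|^p$ used for the local term. (These are exactly the kind of inequalities already invoked in the proof of Lemma~\ref{lemma:be}.) Summing, this yields
\[
\mathcal{H}_{J,p}(u,u)\ge \mathcal{H}_{J,p}(u_+,u_+)+\mathcal{H}_{J,p}(u_-,u_-),
\]
with equality only in degenerate situations, and of course $\|u\|_p^p=\|u_+\|_p^p+\|u_-\|_p^p$.

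Next I would use that $u$ is an actual eigenfunction, not merely a minimizer: testing the weak formulation with $v=u_+$ (which lies in $W^{1,p}_0(\Omega)$ since $u$ does) gives $\mathcal{H}_{J,p}(u,u_+)=\lambda_1(p)\int_\Omega |u|^{p-2}u\,u_+\,dx=\lambda_1(p)\|u_+\|_p^p$, and by the first inequality above $\mathcal{H}_{J,p}(u_+,u_+)\le \mathcal{H}_{J,p}(u,u_+)$. Hence
\[
\frac{\mathcal{H}_{J,p}(u_+,u_+)}{\|u_+\|_p^p}\le \lambda_1(p),
\]
provided $u_+\not\equiv 0$, and by the variational characterization of $\lambda_1(p)$ the reverse inequality holds, so $u_+$ is itself a minimizer and therefore an eigenfunction associated with $\lambda_1(p)$. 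The same argument applies to $u_-$. So if $u$ changes sign, then both $u_+$ and $u_-$ are nontrivial nonnegative eigenfunctions for $\lambda_1(p)$.

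Then I would derive a contradiction. By Corollary~\ref{cor:regeigen}, $u_+\in C^{1,\alpha}(\overline{\Omega})$, and since $u_+$ is an eigenfunction it satisfies $\mathcal{L}_{J,p}u_+=\lambda_1(p)|u_+|^{p-2}u_+\ge 0$ in the weak sense in $\Omega$, with $u_+\ge 0$ in $\Omega^c$; by Corollary~\ref{cor:eigenvisc} it is a viscosity supersolution of $\mathcal{L}_{J,p}u_+=0$ (dropping the nonnegative right-hand side). The strong maximum principle, Lemma~\ref{lemma:smp}, then forces $u_+\equiv 0$ in $\mathbb{R}^N$ or $u_+>0$ in all of $\Omega$; since $u_+\not\equiv 0$, we get $u_+>0$ in $\Omega$, hence $u_-\equiv 0$, contradicting $u_-\not\equiv 0$. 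Therefore $u$ has constant sign.

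The main obstacle I anticipate is justifying the split energy inequality cleanly: one must be careful that the double integral defining the nonlocal part of $\mathcal{H}_{J,p}(u,u_+)$ is absolutely convergent (true since $u\in L^\infty$ and $J$ is compactly supported) so that the pointwise inequalities can be integrated termwise, and one must verify that $u_+$ is an admissible test function and an admissible competitor in the infimum for $\lambda_1(p)$, i.e.\ $u_+\in W^{1,p}_0(\Omega)\setminus\{0\}$. A secondary point is the reduction: although the strong maximum principle already gives the conclusion, one could alternatively run the same argument with $u_-$ in place of $u_+$, and note that it is the combination $u_+>0$ in $\Omega$ together with $u_-\not\equiv 0$ that is impossible. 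Once the energy split and the identification of $u_\pm$ as eigenfunctions are in hand, the rest is a direct application of the strong maximum principle.
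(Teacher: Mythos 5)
Your proof is correct, and it rests on the same two key ingredients as the paper's: producing a nonnegative eigenfunction for $\lambda_1(p)$ and then invoking the viscosity strong maximum principle (Corollary \ref{cor:eigenvisc} together with Lemma \ref{lemma:smp}). The difference is in the reduction step. The paper observes in one line that $|u|$ is again a first eigenfunction (since $|\nabla |u||=|\nabla u|$ and $\big||u(x)|-|u(y)|\big|\le |u(x)-u(y)|$ make $|u|$ a minimizer of the Rayleigh quotient, hence an eigenfunction), applies the strong maximum principle to get $|u|>0$ in $\Omega$, and concludes that the continuous function $u$ never vanishes on the connected set $\Omega$, so it has constant sign. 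You instead split $u=u_+-u_-$, test the weak formulation with $u_+$, and use the pointwise inequality $|a-b|^{p-2}(a-b)(a_+-b_+)\ge|a_+-b_+|^p$ (the same one quoted in the proof of Lemma \ref{lemma:be}) to show that each nontrivial part is itself a minimizer, hence a first eigenfunction, and then let the strong maximum principle applied to $u_+$ contradict $u_-\not\equiv0$. Both routes are standard and hinge on the same maximum principle; yours is longer but makes explicit the energy comparison that the paper's ``so is $|u|$'' leaves implicit, and it records the (routine) extra fact that a sign change would force $u_+$ and $u_-$ to be separate nonnegative first eigenfunctions. Two minor points: your first displayed inequality has garbled delimiters (the ``more precisely'' version is the one actually used, so nothing is lost), and you correctly note the small step the paper glosses over, namely that for a nonnegative eigenfunction the right-hand side $\lambda_1(p)u_+^{p-1}\ge0$ lets one view it as a viscosity supersolution of $\mathcal{L}_{J,p}u=0$, which is the form required by Lemma \ref{lemma:smp}.
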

        \begin{proof}
             Since $u$ is an eigenfunction corresponding to
             $\lambda_1(p),$ so is $|u|.$ Therefore
             we can assume with no loss of generality that $u$ is
             nonnegative. Then, by Corollary \ref{cor:eigenvisc} and Lemma
             \ref{lemma:smp}, we have that $u>0$ in $\Omega.$
        \end{proof}

        Our next result shows that the first eigenvalue is simple.

        \begin{theorem}\label{thm:simple}
            Let $p\ge2$ and fix $u$ a positive
            eigenfunction corresponding to $\lambda_1(p).$
            If $\lambda>0$
            is such that there is a non-negative eigenfunction
            $v$ corresponding to $\lambda$ then $\lambda=\lambda_1(p)$
            and there is $k\in\mathbb{R}$ such that $v=ku$ in $\mathbb{R}^N.$
        \end{theorem}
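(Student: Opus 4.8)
The plan is to adapt the classical Picone-type / hidden-convexity argument used for the local $p$-Laplacian (as in \cite{lind}) and its nonlocal counterpart (as in \cite{LL, FP}) to the combined operator $\mathcal{L}_{J,p}$. The key structural fact is that the Rayleigh quotient $u \mapsto \mathcal{H}_{J,p}(u,u)/\|u\|_p^p$ becomes convex after the substitution $u = w^{1/p}$ with $w \ge 0$; since $\mathcal{H}_{J,p}$ is the sum of the local Dirichlet form and the nonlocal $J$-form, and convexity along this curve is known for each piece separately, it holds for the sum.

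First I would record the two pointwise/integral inequalities underlying this. For the local part, for $w_1, w_2 \ge 0$ in $W^{1,p}_0(\Omega)$, one has the Díaz--Saá / Lindqvist inequality
\[
\int_\Omega |\nabla w_1|^{p-2}\nabla w_1 \cdot \nabla\!\left(\frac{w_1^p - w_2^p}{w_1^{p-1}}\right) dx
+ \int_\Omega |\nabla w_2|^{p-2}\nabla w_2 \cdot \nabla\!\left(\frac{w_2^p - w_1^p}{w_2^{p-1}}\right) dx \ge 0,
\]
with equality iff $w_1 = k w_2$. For the nonlocal part, the analogous discrete Picone inequality (see \cite[Lemma 9]{LL} or \cite[Lemma 6.2]{FP}) gives
\[
|a_1 - a_2|^{p-2}(a_1-a_2)\left[\left(\frac{b_1^p - c_1^p}{b_1^{p-1}}\right)(x) - (\cdots)(y)\right] + \text{(symmetric term)} \ge 0
\]
for the appropriate choices, again with rigidity. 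Then I would take $u > 0$ the fixed eigenfunction for $\lambda_1(p)$ and $v \ge 0$ an eigenfunction for $\lambda$. To justify that $\eta_t := ((1-t)u^p + t v^p)^{1/p}$ and the test functions $(u^p - v_\varepsilon^p)/u^{p-1}$ (with $v_\varepsilon = v + \varepsilon$, $u_\varepsilon = u+\varepsilon$ to avoid division by zero, then $\varepsilon \to 0$) are admissible in $W^{1,p}_0(\Omega)$, I would use $u, v \in C^{1,\alpha}(\overline\Omega)$ from Corollary \ref{cor:regeigen} together with the strong maximum principle (Lemma \ref{lemma:smp}) — note $v > 0$ in $\Omega$ as well once we know $v \not\equiv 0$, by Lemma \ref{lemma:constantsign}'s argument — and Hopf-type boundary behaviour to control $u/v$ near $\partial\Omega$.

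The core computation is then: plug $\phi = (u_\varepsilon^p - v_\varepsilon^p)/u_\varepsilon^{p-1}$ into the weak formulation for $u$ (eigenvalue $\lambda_1(p)$) and $\psi = (v_\varepsilon^p - u_\varepsilon^p)/v_\varepsilon^{p-1}$ into the weak formulation for $v$ (eigenvalue $\lambda$), add the two identities, and let $\varepsilon \to 0$. The left-hand side is nonnegative by the two Picone inequalities above; the right-hand side collapses to $(\lambda_1(p) - \lambda)\int_\Omega (u^p - v^p)\,dx$ after the $\varepsilon$-limit. Swapping the roles of $u$ and $v$ (i.e. using the fixed positive eigenfunction symmetrically) yields $(\lambda - \lambda_1(p))\int_\Omega (v^p - u^p)\,dx \ge 0$ as well; combined with $\lambda \ge \lambda_1(p)$ from Theorem \ref{teo.1.intro}, and after normalizing $\|u\|_p = \|v\|_p$, one forces $\lambda = \lambda_1(p)$ and then equality in the Picone inequalities, hence $v = k u$ by the rigidity statements.

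The main obstacle I expect is the $\varepsilon \to 0$ limiting argument and the admissibility of the test functions: one must check that $(u^p - v^p)/u^{p-1} \in W^{1,p}_0(\Omega)$, which requires good control of the quotient $v/u$ (and its gradient) up to the boundary. This is where the $C^{1,\alpha}(\overline\Omega)$ regularity and a Hopf lemma / boundary strong maximum principle for $\mathcal{L}_{J,p}$ are essential — the boundary estimate $c\,\mathrm{dist}(x,\partial\Omega) \le u(x), v(x) \le C\,\mathrm{dist}(x,\partial\Omega)$ near $\partial\Omega$ makes the quotient bounded and the integration by parts legitimate. The nonlocal term is actually easier here since it does not see the boundary derivatives, only the pointwise values, and those are uniformly bounded; the delicate part is entirely the local $p$-Laplacian piece, for which I would invoke the standard machinery from \cite{lind} essentially verbatim, the addition of the nonlocal form only contributing a further nonnegative quantity on the left-hand side.
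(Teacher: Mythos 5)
Your proposal is correct in substance and rests on the same two pillars as the paper's proof: the local Picone identity of Allegretto--Huang and its discrete (nonlocal) counterpart. The difference is in how they are deployed. You use the symmetrized D\'iaz--Sa\'a scheme (test $u$'s equation with $(u_\varepsilon^p-v_\varepsilon^p)/u_\varepsilon^{p-1}$, test $v$'s with the swapped quotient, add), whereas the paper uses the one-sided version: it tests only $v$'s equation with $w_n=u^p/(v+1/n)^{p-1}$ and combines this with the energy identity $\int|\nabla u|^p=\lambda_1(p)\int u^p-\iint J|u(x)-u(y)|^p$ coming from $u$'s own equation, obtaining $0\le(\lambda_1(p)-\lambda)\int u^p$ and hence $\lambda\le\lambda_1(p)$ directly. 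Two remarks on your version. First, the obstacle you single out as the main one --- a Hopf lemma and the two-sided bound $c\,\mathrm{dist}(x,\partial\Omega)\le u,v\le C\,\mathrm{dist}(x,\partial\Omega)$ --- is not needed and is not established anywhere in the paper: regularizing only the \emph{denominator} (as in $u^p/(v+1/n)^{p-1}$, or your $u_\varepsilon$ in the denominator) already makes the test function an element of $W^{1,p}_0(\Omega)$ thanks to $u,v\in C^{1,\alpha}(\overline\Omega)$ and the uniform lower bound $v+1/n\ge 1/n$, and the limit $n\to\infty$ is handled by dominated convergence and Fatou; so you should drop the Hopf-lemma dependence rather than try to prove it. Second, your ``swap the roles of $u$ and $v$'' step is vacuous --- $(\lambda-\lambda_1(p))\int(v^p-u^p)\ge0$ is literally the same inequality as $(\lambda_1(p)-\lambda)\int(u^p-v^p)\ge0$ --- but your argument still closes: either normalize $\|u\|_p=\|v\|_p$ so the right-hand side vanishes and equality in Picone forces $v=ku$ (whence $\lambda=\lambda_1(p)$), or use the scaling freedom in $u$ together with $\lambda\ge\lambda_1(p)$ to conclude. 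Tighten that sentence and the proof is sound.
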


        \begin{proof}
	        By Corollary \ref{cor:eigenvisc} and Lemma
             \ref{lemma:smp}, we have that $v>0$ in $\Omega.$ Moreover for all
             $n\in\mathbb{N}$
             we have that
             \[
                w_n\coloneqq \dfrac{u^p}{\left(v+1/n\right)^{p-1}}\in W^{1,p}_0(\Omega).
             \]

             Then, by Picone's identity (see \cite{AH}), we have
             \begin{align*}
	                0&\le\int_{\Omega} |\nabla u|^p + (p-1)
	                \dfrac{u^p}{\left(v+1/n\right)^{p}}|\nabla v|^p
	                -p\dfrac{u^{p-1}}{\left(v+1/n\right)^{p-1}}
	                |\nabla v|^{p-2}\nabla v\nabla u\, dx\\
	                &=\int \left(|\nabla u|^p-
	                |\nabla v|^{p-2}\nabla v\nabla w_n\right)dx\\
	                &=\lambda_1(p)\int_{\Omega}|u|^p dx
	                -\lambda\int_{\Omega} |v|^{p-2}v w_n dx\\
	                &\qquad -\int_{\mathbb{R}^N}
	                \int_{\mathbb{R}^N}J(x-y)|u(x)-u(y)|^pdxdy\\
	                & \qquad +  \int_{\mathbb{R}^N}
	                \int_{\mathbb{R}^N}J(x-y)|v(x)-v(y)|^{p-2}
	                (v(x)-v(y))(w_n(x)-w_n(y))dxdy.
             \end{align*}
             On the other hand, by \cite[Lemma 6.2]{A}, we have that
             \begin{align*}
                 &-\int_{\mathbb{R}^N}
	                \int_{\mathbb{R}^N}J(x-y)|u(x)-u(y)|^pdxdy\\
	             &+  \int_{\mathbb{R}^N}
	                \int_{\mathbb{R}^N}J(x-y)|v(x)-v(y)|^{p-2}
	                (v(x)-v(y))(w_n(x)-w_n(y))dxdy\le 0. 	
            \end{align*}
            Therefore
            \begin{align*}
	                0&\le\int_{\Omega} |\nabla u|^p + (p-1)
	                \dfrac{u^p}{\left(v+1/n\right)^{p}}|\nabla v|^p
	                -p\dfrac{u^{p-1}}{\left(v+1/n\right)^{p-1}}
	                |\nabla v|^{p-2}\nabla v\nabla u\, dx\\
	                &\le \lambda_1(p)\int_{\Omega}|u|^p dx
	                -\lambda\int_{\Omega} |v|^{p-2}v w_n dx.
            \end{align*}

            By the dominated convergence theorem and Fatou's lemma we get
            \begin{align*}
	                0&\le \int_{\Omega}\left(|\nabla u|^p + (p-1)
	                \dfrac{u^p}{v^{p}}|\nabla v|^p
	                -p\dfrac{u^{p-1}}{v^{p-1}}
	                |\nabla v|^{p-2}\nabla v\nabla u\right) dx\\
	                &\le (\lambda_1(p)-\lambda)\int_{\Omega}|u|^p dx.
            \end{align*}
            Therefore, $\lambda=\lambda_1(p).$

            Moreover, using again Picone's identity, we obtain
            \[
                |\nabla u|^p + (p-1)
	                    \dfrac{u^p}{v^{p}}|\nabla v|^p
	                -p\dfrac{u^{p-1}}{v^{p-1}}
	                |\nabla v|^{p-2}\nabla v\nabla u=0
	                \quad \text{ a. e. in }  \mathbb{R}^N.
            \]
            Thus, there is $k>0$ with $v=ku$ in $\Omega.$
       \end{proof}

       Our next goal is to show that $\lambda_1(p)$ is isolated.
       For this we need the following technical lemma.

       \begin{lemma}\label{lemma:tecnico}
            Let $p\ge2.$
            If $u$ is an eigenfunction corresponding to $\lambda>\lambda_1(p)$
            then there is a positive constant $C$ independent of $u$ such that
            \[
	            C<|\{u<0\}|.
            \]
        \end{lemma}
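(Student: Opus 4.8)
The plan is to test the weak formulation of the eigenvalue equation with the negative part $u_{-}\coloneqq\max\{-u,0\}$ and then to combine the resulting differential inequality with a Sobolev inequality.

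First I would check that $|\{u<0\}|>0$. Indeed, if $|\{u<0\}|=0$ then $u\ge 0$ is itself a nonnegative eigenfunction corresponding to $\lambda$, so Theorem~\ref{thm:simple} would force $\lambda=\lambda_1(p)$, contradicting $\lambda>\lambda_1(p)$. In particular $u_{-}\in W^{1,p}_0(\Omega)\setminus\{0\}$.

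Next I would plug $v=u_{-}$ into the identity $\mathcal{H}_{J,p}(u,v)=\lambda\int_\Omega|u|^{p-2}uv\,dx$. Since $\nabla u_{-}=-\nabla u$ a.e.\ on $\{u<0\}$ and $\nabla u_{-}=0$ a.e.\ on $\{u\ge0\}$, the local term of $\mathcal{H}_{J,p}(u,u_{-})$ equals $-\int_\Omega|\nabla u_{-}|^p\,dx$, and the right-hand side equals $-\lambda\int_\Omega u_{-}^p\,dx$. For the nonlocal term I would apply the elementary pointwise inequality recorded in the proof of Lemma~\ref{lemma:be} to the function $-u\in W^{1,p}_0(\Omega)$; after rearranging it reads
\[
|u(x)-u(y)|^{p-2}(u(x)-u(y))(u_{-}(x)-u_{-}(y))\le -|u_{-}(x)-u_{-}(y)|^p\le 0,
\]
so the nonlocal term of $\mathcal{H}_{J,p}(u,u_{-})$ is $\le 0$. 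Collecting the three contributions yields
\[
\int_\Omega|\nabla u_{-}|^p\,dx\ \le\ \lambda\int_\Omega u_{-}^p\,dx .
\]

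Finally I would fix an exponent $q>p$ for which the Sobolev inequality $\|w\|_{L^q(\Omega)}^p\le S\|\nabla w\|_{L^p(\Omega)}^p$ holds for all $w\in W^{1,p}_0(\Omega)$, with $S=S(p,\Omega)>0$: one may take $q=\tfrac{Np}{N-p}$ if $p<N$ and any $q>p$ if $p\ge N$ (here one uses that $\Omega$ is bounded). Applying this with $w=u_{-}$ and then Hölder's inequality on $\{u<0\}=\{u_{-}>0\}$,
\[
\int_\Omega u_{-}^p\,dx\ \le\ \|u_{-}\|_{L^q(\Omega)}^p\,|\{u<0\}|^{1-p/q}\ \le\ S\Big(\int_\Omega|\nabla u_{-}|^p\,dx\Big)|\{u<0\}|^{1-p/q}\ \le\ S\lambda\Big(\int_\Omega u_{-}^p\,dx\Big)|\{u<0\}|^{1-p/q}.
\]
Dividing by $\int_\Omega u_{-}^p\,dx>0$ gives $1\le S\lambda\,|\{u<0\}|^{1-p/q}$, hence
\[
|\{u<0\}|\ \ge\ (S\lambda)^{-q/(q-p)}\eqqcolon C>0,
\]
and $C$ depends only on $p$, $\Omega$ and $\lambda$, not on $u$. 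I do not expect any serious obstacle: the two steps needing a little care are the sign of the nonlocal contribution (exactly the content of the quoted pointwise inequality) and choosing one Sobolev exponent valid for every $p\ge2$. It is also worth recording that this $C$ is decreasing in $\lambda$, which is the form in which the estimate will be used to prove that $\lambda_1(p)$ is isolated.
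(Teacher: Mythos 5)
Your proposal is correct and follows essentially the same route as the paper: rule out $u_-\equiv 0$ via the simplicity theorem, test the weak formulation with the negative part, use the sign of the nonlocal term (the same pointwise inequality as in Lemma \ref{lemma:be}) to reduce to $\|\nabla u_-\|_p^p\le\lambda\|u_-\|_p^p$, and conclude by Sobolev embedding plus H\"older on $\{u<0\}$. The only differences are cosmetic (your $u_-=\max\{-u,0\}$ versus the paper's $\min\{u,0\}$), and your final constant $(S\lambda)^{-q/(q-p)}$ agrees with the paper's (whose printed exponent $q/(p-q)$ is a sign typo).
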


        \begin{proof}
	        By Theorem \ref{thm:simple}, we have that $u_{-}\coloneqq\min\{u,0\}\not\equiv0.$
	        Since $u_-\in W^{1,p}_0(\Omega),$ by the  Sobolev embedding theorem
	        for any $q\in (p,p_*)$ there is a constant $C$ independent of $u$ such that
	        \begin{equation}\label{eq:set}
	                        C\|u_-\|_{q}\le\|\nabla u_-\|_{p}.
            \end{equation}
	        Here $p_*$ denotes the Sobolev critical exponent, tha is
	        \[
	            p_*=\begin{cases}
	                \dfrac{Np}{N-p} &\text{ if } p<N\\
	                \infty &\text{ if } p\ge N.
                \end{cases}
	        \]
	
	        Thus, by \eqref{eq:set} we get
	       $$
	            C\|u_-\|_{q}^p\le\|\nabla u_-\|_{p}^p
	            \le \mathcal{H}_{J,p}(u,u_-)
	            \le \lambda \|u_-\|_{L^p(\Omega)}^p.
            $$
            Finally, by H\"older's inequality we get
            \[
                 C\|u_-\|_{p}^p
                 \le \lambda\|u_-\|_{q}^p
                 |\{u<0\}|^{\frac{q-p}{q}}.
            \]
            Therefore
            \[
                \left(\dfrac{C}\lambda\right)^{\frac{q}{p-q}}
                 \le
                 |\{u<0\}|.
            \]
        \end{proof}

        Now, proceeding as in the proof of \cite[Theorem 4.11]{DPQ} we show that
        the next result holds.

        \begin{theorem}\label{thm:isolated}
            Let $p\ge 2.$ Then $\lambda_1(p)$ is isolated.
        \end{theorem}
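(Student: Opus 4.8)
The plan is to argue by contradiction, following the scheme of \cite[Theorem 4.11]{DPQ} adapted to the present operator. Suppose $\lambda_1(p)$ is not isolated. Then there is a sequence of eigenvalues $\mu_j$ of \eqref{eq:EP} with $\mu_j>\lambda_1(p)$ and $\mu_j\to\lambda_1(p)$, together with associated eigenfunctions $u_j$, which we normalize by $\|u_j\|_p=1$. Since $\mu_j>\lambda_1(p)$, Theorem \ref{thm:simple} prevents $u_j$ from having constant sign, so $u_j^-\coloneqq\min\{u_j,0\}\not\equiv0$ and Lemma \ref{lemma:tecnico} applies: there is a constant $C>0$, independent of $j$, such that $|\{u_j<0\}|>C$ for every $j$.

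The second step is to extract a limit. Testing the equation for $u_j$ against $u_j$ itself gives $\mathcal{H}_{J,p}(u_j,u_j)=\mu_j\|u_j\|_p^p=\mu_j$, which is bounded. Since the two terms in $\mathcal{H}_{J,p}(u_j,u_j)$ are nonnegative, $\|\nabla u_j\|_p$ is bounded, hence $\{u_j\}$ is bounded in $W^{1,p}_0(\Omega)$. Passing to a subsequence, $u_j\rightharpoonup u_\infty$ weakly in $W^{1,p}_0(\Omega)$ and, by the Rellich-Kondrachov compact embedding, $u_j\to u_\infty$ strongly in $L^p(\Omega)$ and a.e. in $\Omega$. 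In particular $\|u_\infty\|_p=1$, so $u_\infty\not\equiv0$.

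Next I would identify $u_\infty$ as a first eigenfunction. The map $v\mapsto\mathcal{H}_{J,p}(v,v)$ is the sum of two convex, $p$-homogeneous functionals (the local Dirichlet energy $\int_\Omega|\nabla v|^p$ and the nonlocal energy $\int_{\mathbb{R}^N}\int_{\mathbb{R}^N}J(x-y)|v(x)-v(y)|^p\,dx\,dy$), each of which is sequentially weakly lower semicontinuous on $W^{1,p}_0(\Omega)$; therefore $\mathcal{H}_{J,p}(u_\infty,u_\infty)\le\liminf_j\mathcal{H}_{J,p}(u_j,u_j)=\lim_j\mu_j=\lambda_1(p)$. Together with $\|u_\infty\|_p=1$ and the variational definition of $\lambda_1(p)$, this forces $\mathcal{H}_{J,p}(u_\infty,u_\infty)=\lambda_1(p)$, so $u_\infty$ minimizes the Rayleigh quotient and hence, by the standard variational argument, is an eigenfunction corresponding to $\lambda_1(p)$. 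By Lemma \ref{lemma:constantsign} it has constant sign, say $u_\infty\ge0$, and then Corollary \ref{cor:eigenvisc} together with the strong maximum principle, Lemma \ref{lemma:smp}, yields $u_\infty>0$ in $\Omega$.

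Finally, the contradiction: since $u_j\to u_\infty$ a.e. in $\Omega$ and $u_\infty>0$ a.e.\ in $\Omega$, for a.e.\ $x\in\Omega$ one has $u_j(x)>0$ for all large $j$, so $\chi_{\{u_j<0\}}\to0$ a.e.\ in $\Omega$; by dominated convergence $|\{u_j<0\}|\to0$, contradicting $|\{u_j<0\}|>C>0$. Hence $\lambda_1(p)$ is isolated. The one point that needs a little care is the weak lower semicontinuity and the passage to the limit in the nonlocal term; it is cleanest to work at the level of the energy and the infimum characterization, as above, rather than trying to pass to the limit directly in the Euler-Lagrange equation, which would require strong convergence of the gradients $\nabla u_j$.
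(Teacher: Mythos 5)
Your proof is correct and follows essentially the same route as the paper: argue by contradiction with a sequence $\mu_j\searrow\lambda_1(p)$, use Lemma \ref{lemma:tecnico} for a uniform lower bound on $|\{u_j<0\}|$, extract a limit by compactness, identify it as a first eigenfunction (hence of constant sign and strictly positive), and derive a contradiction from the convergence of the $u_j$. The only differences are cosmetic: the paper finishes with Egoroff's theorem where you use a.e. convergence plus dominated convergence, and you spell out the weak lower semicontinuity step that the paper leaves implicit.
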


        \begin{proof} By definition, $\lambda_1(p)$
		is left-isolated. To prove that $\lambda_1(p)$
		is right-isolated, we argue by contradiction. We assume
		that there is a sequence of eigenvalues
		$\{\lambda_k\}_{k\in \mathbb{N}}$ such that
		$\lambda_k \searrow \lambda_1(p)$ as $k\to \infty.$
		Let $u_k$ be an eigenfunction associated to $\lambda_k$ such
		that $\|u_k\|_{p}=1.$ Then $\{u_k\}_{k\in \mathbb{N}}$ is
		bounded in $W_0^{s,p}(\Omega)$ and therefore
		we can extract a subsequence (that we still denoted by
		$\{u_k\}_{k\in \mathbb{N}}$) such that
		\[
				u_k\rightharpoonup u  \text{ weakly in }
				W_0^{s,p}(\Omega),\qquad
				u_k\to u  \text{ strongly in }
				L^p (\Omega).
		\]	
		Then $\|u\|_{p}=1$ and $u$ is an eigenfunction associated to $\lambda_1(p).$
		Therefore $u$ has constant sign.
		
		Now,
		we can arrive to a contradiction. By Egoroff's theorem we can find a subset $A_\delta$
		of $\Omega$ such that $|A_\delta| < \delta$ and $u_k \to u$ uniformly in $\Omega \setminus A_\delta$. From Lemma \ref{lemma:tecnico} and the uniform convergence in  $\Omega \setminus A_\delta$ we obtain that $|\{u>0\}| >0$ and $|\{u>0\}| <0$.
		 This contradicts the fact that an eigenfunction associated with the first eigenvalue does not change sign.
  \end{proof}

        We also have the existence of higher eigenvalues.

        \begin{theorem} \label{teo2.lll}
			There is a sequence of eigenvalues $\lambda_n$ such that
			$\lambda_n\to\infty$ as $n\to\infty$.
		\end{theorem}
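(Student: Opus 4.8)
The plan is to obtain $\{\lambda_n\}$ by the standard Lusternik--Schnirelmann minimax scheme over sets of prescribed Krasnoselskii genus, exactly as in \cite[Theorem 4.11]{DPQ} (and \cite{peral}) for the local and fractional $p$-Laplacians; the only point requiring care is the Palais--Smale condition, where one must check that the zero-order nonlocal term does not destroy the $(S_+)$ behaviour of $-\Delta_p$.

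Set
\[
\mathcal{M}\coloneqq\{u\in W^{1,p}_0(\Omega):\|u\|_p=1\},\qquad F(u)\coloneqq\mathcal{H}_{J,p}(u,u),
\]
so that $F\in C^1(W^{1,p}_0(\Omega))$ is even and $F\ge\lambda_1(p)>0$ on the $C^1$ manifold $\mathcal{M}$. If $u$ is a critical point of $F|_{\mathcal{M}}$ then the Lagrange multiplier rule gives $\mathcal{H}_{J,p}(u,v)=\mu\int_\Omega|u|^{p-2}uv\,dx$ for all $v\in W^{1,p}_0(\Omega)$ with $\mu=F(u)$, i.e. $\mu$ is an eigenvalue of \eqref{eq:EP} with eigenfunction $u$. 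Let $\Sigma_n$ be the family of compact symmetric subsets $A\subset\mathcal{M}$ with genus $\gamma(A)\ge n$; it is nonempty for every $n$ since $W^{1,p}_0(\Omega)$ is infinite dimensional (take the intersection of $\mathcal{M}$ with an $n$-dimensional subspace, whose genus is $n$). Define
\[
\lambda_n\coloneqq\inf_{A\in\Sigma_n}\ \sup_{u\in A}F(u),
\]
so that $\lambda_1(p)\le\lambda_1\le\lambda_2\le\cdots$. Once the Palais--Smale condition is established, the classical deformation lemma together with the subadditivity and monotonicity of the genus show that each $\lambda_n$ is a critical value of $F|_{\mathcal{M}}$, hence an eigenvalue of \eqref{eq:EP}.

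For the Palais--Smale condition, let $\{u_k\}\subset\mathcal{M}$ with $F(u_k)\to c$ and $(F|_{\mathcal{M}})'(u_k)\to 0$. Testing the constrained derivative with $u_k$ shows that the multipliers satisfy $\mu_k=F(u_k)\to c$, and since $\|\nabla u_k\|_p^p\le F(u_k)$ the sequence is bounded in $W^{1,p}_0(\Omega)$; passing to a subsequence, $u_k\rightharpoonup u$ in $W^{1,p}_0(\Omega)$, $u_k\to u$ in $L^p(\Omega)$ and a.e. Testing with $u_k-u$ and using $u_k\to u$ in $L^p(\Omega)$ together with the boundedness of $\mu_k$ gives
\[
\int_\Omega|\nabla u_k|^{p-2}\nabla u_k\cdot\nabla(u_k-u)\,dx+\mathcal{B}_k(u_k-u)\to 0,
\]
where $\mathcal{B}_k(\cdot)$ denotes the nonlocal bilinear part of $\mathcal{H}_{J,p}(u_k,\cdot)$. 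Since $J$ is compactly supported and all the functions vanish outside the bounded set $\Omega$, the family $|u_k(x)-u_k(y)|^{p-2}(u_k(x)-u_k(y))$ is bounded in $L^{p'}$ for the finite measure $J(x-y)\,dx\,dy$, while $(u_k-u)(x)-(u_k-u)(y)\to 0$ in $L^p$ for the same measure; hence $\mathcal{B}_k(u_k-u)\to 0$. Therefore $\int_\Omega\big(|\nabla u_k|^{p-2}\nabla u_k-|\nabla u|^{p-2}\nabla u\big)\cdot\nabla(u_k-u)\,dx\to 0$, and the $(S_+)$ property of $-\Delta_p$ yields $u_k\to u$ strongly in $W^{1,p}_0(\Omega)$.

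Finally we show $\lambda_n\to\infty$. If not, $\lambda_n\le c$ for all $n$, so for each $n$ there is $A_n\in\Sigma_n$ with $\sup_{A_n}F\le c+1$, hence $A_n\subset\Psi^{c+1}\coloneqq\{u\in\mathcal{M}:F(u)\le c+1\}$ and $\gamma(\Psi^{c+1})\ge\gamma(A_n)\ge n$ for all $n$, i.e. $\gamma(\Psi^{c+1})=+\infty$. On the other hand, $\Psi^{c+1}$ is bounded in $W^{1,p}_0(\Omega)$, hence precompact in $L^p(\Omega)$; its $L^p$-closure is a compact symmetric subset of $\{u\in L^p(\Omega):\|u\|_p=1\}$, which does not contain the origin and therefore has finite genus by a basic property of the genus. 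Since the inclusion $W^{1,p}_0(\Omega)\hookrightarrow L^p(\Omega)$ is odd and continuous, $\gamma(\Psi^{c+1})$ is finite too, a contradiction. The main technical obstacle in this argument is the verification of the Palais--Smale condition above, namely absorbing the zero-order nonlocal term into the $(S_+)$ argument for $-\Delta_p$.
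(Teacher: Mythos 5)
Your proposal is correct and follows essentially the same route as the paper, which also obtains the sequence $\{\lambda_n\}$ from a genus-based Lusternik--Schnirelmann minimax (following Garcia-Azorero--Peral); the only cosmetic difference is that you minimax the energy $\mathcal{H}_{J,p}(u,u)$ on the $L^p$-sphere while the paper maximizes $\frac1p\int_\Omega|u|^p$ on the level set $\mathcal{H}_{J,p}(u,u)=p\alpha$, two formulations equivalent by $p$-homogeneity. You in fact supply the details (Palais--Smale via the $(S_+)$ property with the zero-order term absorbed by $L^p$-compactness, and the finite-genus argument for $\lambda_n\to\infty$) that the paper delegates to the reference, and these are carried out correctly.
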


		\begin{proof}
			It follows as in \cite{GAP} and hence we omit the details and only sketch the proof
			for the reader's convenience.
			Let us consider
			\[
				M_\alpha = \Big\{u \in W_0^{1,p}(
				\Omega)\colon  \mathcal{H}_{J,p}(u,u) = p \alpha \Big\}
			\]
			and
			\[
				\varphi (u) = \frac{1}{p}
					\int_{\Omega} |u|^p.
			\]
			We are looking for critical points
			of $\varphi$ restricted to the manifold $M_\alpha$ using a minimax
			technique.
			We consider the class
			\[
  				\Sigma =\Big\{A\subset W_0^{1,p}(
				\Omega)\setminus\{0\}
				\colon A \mbox{ is closed, } A=-A \Big\}.
  			\]
			Over this class we define the genus,
			$\gamma\colon\Sigma\to {\mathbb{N}}\cup\{\infty\}$, as
			\[
				\gamma(A) = \min \Big\{k\in {\mathbb{N}}\colon
				\mbox{there exists } \phi\in C(A,{{\mathbb{R}}}^k-\{0\}),
				\ \phi(x)=-\phi(-x)\Big\}.
			\]
			Now, we let $$C_k = \Big\{ C \subset M_\alpha \colon C
			\mbox{ is compact, symmetric and } \gamma ( C) \le k \Big\} $$
			and let
			\begin{equation}
					\label{betak} \beta_k
					= \sup_{C \in C_k} \min_{u \in C} \varphi(u).
			\end{equation}
			Then $\beta_k >0$ and there exists $u_k \in
			M_\alpha$ such that $\varphi (u_k) = \beta_k$ and $ u_k$ is a weak
			eigenfuction with $\lambda_k = \alpha / \beta_k $.
		\end{proof}

\section{The limit case $p\to\infty$}\label{limitcaseInfty}
    In this section, we study the asymptotic behaviour of
    $\lambda_1(p)^{\nicefrac1p}$ as $p\to\infty.$

    From now on, $u_p$ denotes the positive eigenfunction corresponding to
        $\lambda_1(p)$ such that $\|u_p \|_p=1.$

    In order to pass to the limit in $\mathcal{H}_{J,p}(u,u)$ it is clear that
    for a fixed smooth $u$, it holds that $\|\nabla u\|_p\to \|\nabla u\|_\infty$.
    To deal with the non-local term we use that $J$ is a radially symmetric and compactly supported function to obtain
    $$
    \begin{array}{rl}
    \displaystyle\int_{\mathbb R^N}\int_{\mathbb R^N} J(x-y)|u(x)-u(y)|^pdxdy
    =&\displaystyle \int_{\Omega}\int_{\Omega} J(x-y)|u(x)-u(y)|^pdxdy\\[10pt]
    & \displaystyle + 2 \int_{\Omega}\int_{\Omega_J\setminus\Omega} J(x-y)|u(x)-u(y)|^pdxdy.
    \end{array}
    $$
    where $$\Omega_J=\displaystyle\bigcup_{x\in\Omega} \{x-\mbox{supp}(J)\}.$$ Let us define
    \[
      [u]_J\coloneqq\sup \Big\{|u(x)-u(y)|\colon
            x-y\in \mbox{supp}(J)\Big\}.
    \]
    Thus, for a fixed $u$ we have
    $$
    \begin{array}{rl}\displaystyle
    \left(\int_{\mathbb R^N}\int_{\mathbb R^N} J(x-y)|u(x)-u(y)|^pdxdy\right)^{1/p}\le &\displaystyle
    \left( 2\int_\Omega\int_{\Omega_J} J(x-y)dxdy\right)^{1/p} [u]_J \\[10pt]
     \to& [u]_J \qquad \mbox{as } p\to\infty.
    \end{array}
    $$
    On the other hand, if we let $\Gamma_\delta=\{x\in\Omega, y\in \Omega_J\,:\, J(x-y)>\delta\}$, then
    $$
    \begin{array}{rl}
    \displaystyle\left(\int_{\mathbb R^N}\int_{\mathbb R^N} J(x-y)|u(x)-u(y)|^pdxdy\right)^{1/p}\ge& \displaystyle\left(\delta\iint_{\Gamma_\delta} |u(x)-u(y)|^pdxdy\right)^{1/p}\\
    \to& \displaystyle \sup_{x,y\in\Gamma_\delta} |u(x)-u(y)| \qquad \mbox{as }p \to\infty\\
    \to&  [u]_J\qquad \mbox{as }\delta\to 0.
    \end{array}
    $$
    Hence, we obtain that
    $$
       \left(\int_{\mathbb R^N}\int_{\mathbb R^N} J(x-y)|u(x)-u(y)|^pdxdy\right)^{1/p}
     \to [u]_J \qquad \mbox{as } p\to\infty.
    $$

    Therefore, for a fixed $u$ we have
    \begin{equation}\label{eq.limiteenergia}
\left(\mathcal{H}_{J,p}(u,u)\right)^{\nicefrac1p} \to \max\Big\{\|\nabla u\|_\infty, [u]_J\Big\}\qquad \mbox{as } p\to\infty.
\end{equation}

Now we are ready to prove Theorem \ref{thm:limiteInfty.intro} that says that
        \[
	        \left[\lambda_1(p)\right]^{\nicefrac1p}\to
            \Lambda\coloneqq\inf\left\{
                \dfrac{\max\left\{\|\nabla u\|_{\infty},
                [u]_J\right\}}{\|u\|_\infty}\colon u\in W^{1,\infty}_0(\Omega)
            \right\},
        \]
and that there is a sequence $\{p_n\}_{n\in\mathbb{N}}$
        such that $p_n\to \infty,$ $u_{p_n}\to u_\infty$ uniformly in
        $\overline{\Omega}$ with
         \[
            \Lambda=\frac{\max\Big\{\|\nabla u_\infty\|_{\infty},
                [u_\infty]_J\Big\}}{\|u_\infty\|_\infty}.
        \]

     \begin{proof}[Proof of Theorem \ref{thm:limiteInfty.intro}]
	    Let $u\in W^{1, \infty}_0(\Omega)$ be a nontrivial function. Then for any $p$ we have that  $u\in W^{1,p}_0(\Omega)\setminus\{0\}$ and therefore
$$
\lambda_1(p) \le \dfrac{\mathcal{H}_{J,p}(u,u)}{\|u\|_p^p}.
$$
Then, by \eqref{eq.limiteenergia},
        \[
            \limsup_{p\to\infty} [\lambda_1(p)]^{\nicefrac1p}
            \le \dfrac{ \max \Big\{\|\nabla u\|_\infty, [u]_J\Big\}}{\|u\|_\infty}.
        \]
        Since $u$ is arbitrary, we get
        \begin{equation}\label{eq:limsup}
	        \limsup_{p\to\infty} [\lambda_1(p)]^{\nicefrac1p}\le\Lambda.
        \end{equation}

        Our next aim is to show that
        \[
            \liminf_{p\to\infty} [\lambda_1(p)]^{\nicefrac1p}\ge\Lambda.
        \]
        Let $\{p_n\}_{m\in\mathbb{N}}$ be such that $p_n\to\infty$ and
        \begin{align*}
	        \liminf_{p\to\infty}& [\lambda_1(p)]^{\nicefrac1p}
            =\lim_{n\to\infty} [\lambda_1(p_n)]^{\nicefrac1p_n}\\
            &=\lim_{n\to\infty} \|\nabla u_{p_n}\|_{p_n}^{p_n}+
          \int_{\mathbb{R}^N}\int_{\mathbb{R}^N}J(x-y)|u_{p_n}(x)-u_{p_n}(y)|^{p_n}
            dxdy.
        \end{align*}
        Observe that, from \eqref{eq:limsup}, we get
        \begin{equation}\label{eq:cota1}
	         \|\nabla u_{p_n}\|_{p_n}
            \le\left(2 \Lambda\right)^{\nicefrac1{p_n}}
        \end{equation}
       for any $n\in\mathbb{N}.$
       Now, given $q>N$ there is $n_0$ such that $q<p_n$  for all $n\ge n_0.$ Thus,
       using  H\"older's inequality  and \eqref{eq:limsup}, we have
       \begin{align*}
	        \|\nabla u_{p_n}\|_q\le \|\nabla u_{p_n}\|_{p_n}
	        |\Omega|^{\nicefrac1q-\nicefrac1{p_n}}
	        \le \left(2 \Lambda\right)^{\nicefrac1{p_n}}
	        |\Omega|^{\nicefrac1q-\nicefrac1{p_n}}
       \end{align*}
       for all $n\ge n_0.$ Therefore $\{u_{p_n}\}_{n\ge n_0}$ is bounded
       in $W^{1,q}_0(\Omega).$ Thus, by  the Sobolev embedding theorem,
       passing to a subsequence, still denoted by $\{p_n\}_{n\in\mathbb{N}},$
       we have that
       \[
            u_{p_n}\to u_\infty
       \]
       weakly in $W^{1,q}_0(\Omega)$ and uniformly in $\overline{\Omega}.$

       On the other hand, using again H\"older's inequality, we get
       \begin{align*}
	      &\mathcal{H}_{J,q}(u_{p_n},u_{p_n})
          \le \|\nabla u_{p_n}\|_{p_n}^q|\Omega|^{1-\nicefrac{q}{p_n}}\\
          &+\left(
          \int_{\mathbb{R}^N}\int_{\mathbb{R}^N}
          J(x-y)|u_{p_n}(x)-u_{p_n}(y)|^{p_n} dx dy\right)^{\nicefrac{q}{p_n}}
          \\
          & \qquad \qquad \times
          \left(
          \int_{\Omega}\int_{\Omega_J}
          J(x-y) dx dy\right)^{1-\nicefrac{q}{p_n}}\\
          &\le\left\{
           |\Omega|^{1-\nicefrac{q}{p_n}}+
           \int_{\Omega}\int_{\Omega_J}
          J(x-y) dx dy\right\}^{1-\nicefrac{q}{p_n}}
          [\lambda_1(p_n)]^{\nicefrac{q}{p_n}}.
       \end{align*}
       Passing to the limit one obtains
       \[
            \left[\mathcal{H}_{J,q}(u_{\infty},
            u_{\infty})\right]^{\nicefrac{1}{q}}
            \le \left\{
           |\Omega|+
           \left(
          \int_{\Omega}\int_{\Omega_J}
          J(x-y) dx dy\right)
          \right\}^{\nicefrac{1}{q}}
          \liminf_{p\to\infty}[\lambda_1(p)]^{\nicefrac{1}{p}}.
       \]
       Observe that the above inequality holds for any $q>N$ (using a diagonal argument),
       and then we get that $u_\infty\in W^{1,\infty}_0(\Omega)$ and taking the limit as
        $q\to\infty$ in the last inequality we obtain
        \[
           \max \Big\{\|\nabla u_\infty\|_{\infty},[u_\infty]_J \Big\}
            \le \liminf_{p\to\infty}[\lambda_1(p)]^{\nicefrac{1}{p}}.
        \]
        Moreover from the uniform convergence and the normalization condition, we
        get $\|u_\infty\|_\infty=1$ and therefore
        \begin{equation}\label{eq:liminf}
              \Lambda\le\max \Big\{\|\nabla u_\infty\|_{\infty},[u_\infty]_J \Big\}
            \le \liminf_{p\to\infty}[\lambda_1(p)]^{\nicefrac{1}{p}}.
        \end{equation}
        Then by \eqref{eq:limsup} and \eqref{eq:liminf} we have that
        \[
            \Lambda= \lim_{p\to\infty}[\lambda_1(p)]^{\nicefrac1p}=
            \max \Big\{\|\nabla u_\infty\|_{\infty},[u_\infty]_J\Big\}.
        \]
     \end{proof}

     Now we will give a characterization of $\Lambda$ in terms of $R_J$ and the inradius
     (the radius of the largest ball contained in $\Omega$). Recall that we introduced the notation
     \[
	    R_\Omega\coloneqq\max \Big\{\mbox{dist}(x,\partial\Omega)\colon x\in\overline{\Omega}\Big\},
     \]
 	for the inradius and $K_\Omega\in\mathbb N\cup \{0\}$ and $b\in[0,R_J)$ as
     \[
        R_\Omega=K_\Omega R_J+b,
     \]
that is, we define $K_\Omega$ as the quotient and $b$ as the remainder of the division between $R_\Omega$ and $R_J$.

\begin{lemma}\label{lema.estimates}
For $b=0$, the parameter $\Lambda$ satisfies the lower estimate
$$
\Lambda\ge \max\left\{\frac{1}{K_\Omega},\frac1{R_\Omega}\right\},
$$
while for $b\ne 0$ we have the following estimate
$$
\Lambda\ge \max\left\{ \frac1{R_\Omega},\frac{1}{K_\Omega+b},\frac{1}{K_\Omega+1}\right\}.
$$
\end{lemma}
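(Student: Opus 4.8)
The plan is to bound from below, for an arbitrary nontrivial $u\in W^{1,\infty}_0(\Omega)$, the quotient $\max\{\|\nabla u\|_\infty,[u]_J\}/\|u\|_\infty$ by walking along a shortest segment from a maximum point of $u$ to $\partial\Omega$ and controlling the descent of $u$ either by its gradient or by chains of jumps of length at most $R_J$, and then taking the infimum over $u$. Note that since $\operatorname{supp}(J)=\overline{B(0,R_J)}$, the condition $x-y\in\operatorname{supp}(J)$ is exactly $|x-y|\le R_J$.

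First I would normalise: by homogeneity assume $\|u\|_\infty=1$, and after possibly replacing $u$ by $-u$ assume $\sup_\Omega u=1$. Extended by $0$ outside $\Omega$, $u$ is Lipschitz on $\mathbb R^N$ with constant $G:=\|\nabla u\|_\infty$ (recall $\Omega$ is $C^{1,\alpha}$), so it attains the value $1$ at some $x_0\in\Omega$. Choose $\bar x\in\partial\Omega$ with $L:=|x_0-\bar x|=\operatorname{dist}(x_0,\partial\Omega)\le R_\Omega$; the segment $[x_0,\bar x]$ stays in $\overline\Omega$, since a point of it lying outside $\Omega$ would force the open segment from $x_0$ to meet $\partial\Omega$ at distance $<L$ from $x_0$. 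Write $N_J:=[u]_J$, so that $|u(a)-u(b)|\le N_J$ whenever $|a-b|\le R_J$, while $|u(a)-u(b)|\le G|a-b|$ always.

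Two elementary estimates along this segment do most of the work. From $1=|u(x_0)-u(\bar x)|\le GL\le GR_\Omega$ we get $\max\{G,N_J\}\ge 1/R_\Omega$. Splitting $[x_0,\bar x]$ into $m:=\lceil L/R_J\rceil$ equal subsegments of length $\le R_J$ with nodes $z_0=x_0,\dots,z_m=\bar x$ gives $1\le\sum_{i=1}^m|u(z_{i-1})-u(z_i)|\le m\,N_J$, hence $N_J\ge 1/m$. If $b=0$ then $L\le R_\Omega=K_\Omega R_J$, so $m\le K_\Omega$ and $N_J\ge 1/K_\Omega$; combined with the previous bound and infimised over $u$, this gives $\Lambda\ge\max\{1/R_\Omega,1/K_\Omega\}$. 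If $b\ne 0$ then $L\le R_\Omega<(K_\Omega+1)R_J$, so $m\le K_\Omega+1$ and $N_J\ge 1/(K_\Omega+1)$, which together with $\max\{G,N_J\}\ge 1/R_\Omega$ already produces two of the three claimed terms.

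It remains, in the case $b\ne 0$, to produce the term $1/(K_\Omega+b)$, and this is the one genuinely non-routine point. If $L\le K_\Omega R_J$ then as above $m\le K_\Omega$, so $N_J\ge 1/K_\Omega\ge 1/(K_\Omega+b)$; the degenerate case $K_\Omega=0$ is already covered, since then $R_\Omega=b$ and $\max\{G,N_J\}\ge 1/R_\Omega=1/b$. If instead $L>K_\Omega R_J$, I would walk from $x_0$ along $[x_0,\bar x]$ using $K_\Omega$ jumps of length exactly $R_J$, reaching a point $z_{K_\Omega}$ at distance $K_\Omega R_J$ from $x_0$, and then cover the remaining segment $[z_{K_\Omega},\bar x]$, of length $L-K_\Omega R_J\le b$, by the gradient:
\[
1=|u(x_0)-u(\bar x)|\le K_\Omega N_J+G\,(L-K_\Omega R_J)\le (K_\Omega+b)\,\max\{G,N_J\},
\]
so $\max\{G,N_J\}\ge 1/(K_\Omega+b)$. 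Taking the infimum over $u$ and combining with the previous paragraph yields $\Lambda\ge\max\{1/R_\Omega,1/(K_\Omega+b),1/(K_\Omega+1)\}$. The main obstacle is precisely this mixed chain: recognising that the cheapest descent from $x_0$ to the boundary spends $K_\Omega$ full $R_J$-jumps (each costing $N_J$) plus the leftover length $b$ on the gradient (costing $Gb$), so that the total budget is $K_\Omega N_J+bG\le(K_\Omega+b)\max\{G,N_J\}$; this is exactly where the interplay between the local and the nonlocal operator enters. Everything else is bookkeeping with $\lceil L/R_J\rceil$ and the definition of $K_\Omega$ and $b$.
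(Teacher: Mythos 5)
Your proof is correct and follows essentially the same route as the paper: chains of at most $K_\Omega$ (resp.\ $K_\Omega+1$) jumps of length $\le R_J$ from a maximum point to the boundary give the $1/K_\Omega$ and $1/(K_\Omega+1)$ bounds, and the mixed estimate $1\le K_\Omega[u]_J+b\|\nabla u\|_\infty\le(K_\Omega+b)\max\{[u]_J,\|\nabla u\|_\infty\}$ gives the $1/(K_\Omega+b)$ term, exactly as in the paper. The only (harmless) difference is that you prove $\Lambda\ge 1/R_\Omega$ directly from the Lipschitz bound along the segment, whereas the paper simply cites \cite{JL,JLM} for that inequality.
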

\begin{proof}
	    By \cite{JL,JLM}, we have
	    \begin{equation}\label{eq:cl11}
	        \dfrac1{R_\Omega}\le \Lambda.
	    \end{equation}
On the other hand, for $b=0$ it is easy to check directly that if
	    $u\in W^{1,\infty}_0(\Omega)$ then
$$
\|u\|_\infty\le  K_\Omega [u]_J \qquad R_J<R_\Omega
$$
Indeed, for all $x\in\Omega$ there exists a sequence of points $\{x_i\}_{i=0}^{K_\Omega}\in\overline\Omega$
with $|x_{i+1}-x_i|\le R_J$ such that $x_0=x$ and $x_{K_\Omega}\in \partial\Omega$, then
$$
|u(x)|\le |u(x)-u(x_1)|+|u(x_1)-u(x_2)|+\cdots+|u(x_{K_\Omega-1})-u(x_{K_\Omega})| \le K_\Omega [u]_J.
$$
Therefore,
\begin{equation}\label{eq:cl12}
\frac1{K_\Omega}\le \Lambda.
\end{equation}

Notice that  by the same argument if $b\ne 0$,
$$
\|u\|_\infty\le (K_\Omega+1)[u]_J.
$$
which implies
\begin{equation}\label{eq:cl13}
\frac1{K_\Omega+1}\le \Lambda.
\end{equation}
In order to obtain the second estimate for $b\ne 0$ we note that
    \begin{itemize}
        \item If $\mbox{dist}( x,\partial \Omega)\le R_J K_\Omega$ then
    	    \[
    	        |u(x)|\le K_\Omega[u]_J\le K_\Omega\max\Big\{[u]_J,\|\nabla u\|_\infty\Big\};
            \]
        \item If
    	    $\mbox{dist}( x,\partial \Omega)> R_J K_\Omega$
	        then
	        \begin{align*}
	            |u(x)|&\le K_\Omega[u]_J+(\mbox{dist}( x,\partial \Omega)-K_\Omega R_J)
	                        \|\nabla u\|_\infty\\
	              &\le K_\Omega[u]_J+b
	                        \|\nabla u\|_\infty\\
	              &\le (K_\Omega+b)\max\Big\{[u]_J,\|\nabla u\|_\infty\Big\}.
            \end{align*}
    \end{itemize}
Thus,
\begin{equation}\label{eq:cl14}
\frac1{K_\Omega+b}\le \Lambda.
\end{equation}
Finally, by \eqref{eq:cl11}, \eqref{eq:cl12}, \eqref{eq:cl13}, and \eqref{eq:cl14} the result follows.
\end{proof}

To characterize $\Lambda$ we consider four different cases.

\begin{theorem}
\label{thm:pco}
	    If $b=0$ then
	    \[
	        \Lambda=\max\left\{\dfrac1{R_\Omega},\dfrac{1}{K_\Omega}\right\}.
	    \]
	    While, if $K_\Omega=0$ or $K_\Omega\ne 0$ with $b\ne0$ and $R_J\le 1$ then
	    \[
	        \Lambda=\max\left\{\dfrac1{R_\Omega},\dfrac{1}{K_\Omega+1}\right\}.
	    \]
\end{theorem}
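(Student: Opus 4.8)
The plan is to pair the lower bounds already obtained in Lemma~\ref{lema.estimates} with matching upper bounds produced by a single explicit competitor: the cone based at an incenter of $\Omega$.

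Concretely, I would fix a point $x_0\in\Omega$ with $\mbox{dist}(x_0,\partial\Omega)=R_\Omega$, so that the open ball $B(x_0,R_\Omega)$ is contained in $\Omega$, and take the test function
\[
u(x)=\bigl(R_\Omega-|x-x_0|\bigr)_+ .
\]
Then $u\in W^{1,\infty}_0(\Omega)$ with $\|u\|_\infty=R_\Omega$ and $\|\nabla u\|_\infty=1$, so the only nontrivial ingredient is the identity $[u]_J=\min\{R_J,R_\Omega\}$. The inequality ``$\le$'' holds because $u$ is $1$-Lipschitz on $\mathbb{R}^N$ (hence $|u(x)-u(y)|\le|x-y|\le R_J$ whenever $x-y\in\mbox{supp}(J)=\overline{B(0,R_J)}$) and because $0\le u\le R_\Omega$; the inequality ``$\ge$'' follows by choosing $y=x_0$ and $x$ on the segment emanating from $x_0$ at distance $\min\{R_J,R_\Omega\}$ from $x_0$, where $u$ has decreased by exactly that amount while $x-y$ still lies in $\mbox{supp}(J)$. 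Inserting $u$ into the definition of $\Lambda$ then gives, in every case,
\[
\Lambda\le\frac{\max\bigl\{1,\min\{R_J,R_\Omega\}\bigr\}}{R_\Omega}.
\]

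It remains to simplify this quantity using $R_\Omega=K_\Omega R_J+b$ and to check that it meets the lower bound of Lemma~\ref{lema.estimates}. If $b=0$, then necessarily $K_\Omega\ge1$, hence $R_J\le R_\Omega$, so the bound is $\max\{1/R_\Omega,\,R_J/R_\Omega\}=\max\{1/R_\Omega,1/K_\Omega\}$, which matches. If $K_\Omega=0$, then $R_\Omega=b<R_J$, so $\min\{R_J,R_\Omega\}=R_\Omega$ and the bound is $\max\{1/R_\Omega,1\}=\max\{1/R_\Omega,1/(K_\Omega+1)\}$. If $K_\Omega\ge1$, $b\ne0$ and $R_J\le1$, then $R_\Omega\ge R_J$, so $\min\{R_J,R_\Omega\}=R_J\le1$ and the bound collapses to $1/R_\Omega$; since $R_\Omega=K_\Omega R_J+b<K_\Omega+1$ one has $1/R_\Omega\ge1/(K_\Omega+1)$, so again $1/R_\Omega=\max\{1/R_\Omega,1/(K_\Omega+1)\}$. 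In each subcase the resulting upper bound coincides with the lower bound of Lemma~\ref{lema.estimates}, which proves the two asserted formulas.

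No step is a real obstacle; the only thing demanding care is the computation of $[u]_J$ for the cone — in particular recognizing that when $R_J>R_\Omega$, which happens exactly when $K_\Omega=0$, the increment of $u$ over a $\mbox{supp}(J)$-vector saturates at $R_\Omega$ rather than $R_J$ — followed by the bookkeeping of the size comparisons among $R_\Omega$, $R_J$, $1$, $K_\Omega$ and $K_\Omega+1$ so that the cone quotient reproduces the claimed maxima exactly.
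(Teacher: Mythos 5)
Your proposal is correct and follows essentially the same route as the paper: the cone $(R_\Omega-|x-x_0|)_+$ centered at an incenter (the paper just normalizes it by $R_\Omega$), the computation $[u]_J=\min\{R_J,R_\Omega\}$, and the same case-by-case comparison with the lower bounds of Lemma~\ref{lema.estimates}.
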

\begin{proof}
Let $x_0\in \Omega$ such that
        $R_\Omega=\mbox{dist}(x_0,\partial\Omega)$ and define
        \[
            v(x)=\dfrac{(R_\Omega-|x-x_0|)_+}{R_\Omega.}
        \]

        \begin{center}
        	\begin{tikzpicture}

				\newcommand{\radiusx}{2}
				\newcommand{\radiusy}{.5}
				\newcommand{\height}{6}

				\coordinate (a) at
				(-{\radiusx*sqrt(1-(\radiusy/\height)*(\radiusy/\height))},
				{\radiusy*(\radiusy/\height)});

				\coordinate (b) at
				({\radiusx*sqrt(1-(\radiusy/\height)*(\radiusy/\height))},
				{\radiusy*(\radiusy/\height)});

				\draw[fill=gray!30] (a)--(0,\height)--(b)--cycle;

				\fill[gray!50] circle (\radiusx{} and \radiusy);

				\begin{scope}
					\clip ([xshift=-2mm]a) rectangle ($(b)+(1mm,-2*\radiusy)$);
					\draw circle (\radiusx{} and \radiusy);
					\end{scope}

				\begin{scope}
					\clip ([xshift=-2mm]a) rectangle ($(b)+(1mm,2*\radiusy)$);
					\draw[dashed] circle (\radiusx{} and \radiusy);
				\end{scope}

				\draw[dashed] (0,\height)|-(\radiusx,0) node[right, pos=.25]{$1$}
				node[above,pos=.75]{$R_\Omega$};

				\draw (0,.15)-|(.15,0);
				\draw (0,0) node[below] {$x_0$};
				\draw (1.5,4) node[below] {$v(x)$};
			\end{tikzpicture}
		\end{center}
        Observe that $v\in W^{1,\infty}_0(\Omega)$ and
        \[
            \|v\|_{\infty}  =1, \quad
	            \|\nabla v\|_{\infty} =\dfrac{1}{R_\Omega},
        \]
        \[
	            [v]_J=
	                \begin{cases}
	                    1 &\text{ if } R_J> R_\Omega,\\
	                    \dfrac{R_J}{R_\Omega}&\text{ if } R_J\le R_\Omega.\\
                    \end{cases}
        \]
        Then
        \begin{equation}\label{eq:cl15}
	        \dfrac{\max\left\{\|\nabla v\|_\infty,[v]_J\right\}}{\|v\|_\infty}=
	                \begin{cases}
	                     \max\left\{\dfrac{1}{R_\Omega},1\right\}
	                     &\text{ if } R_J> R_\Omega,\\[10pt]
	                    \max\left\{\dfrac{1}{R_\Omega},
	                    \dfrac{R_J}{R_\Omega}\right\} &\text{ if } R_J\le R_\Omega.\\
                    \end{cases}
        \end{equation}
 Now, we can observe that
        \begin{itemize}
	        \item If $K_\Omega =0$ then $\dfrac1{K_\Omega+1}=1;$
	        \item If $b=0$ then $\dfrac{1}{K_\Omega}
	                =\dfrac{R_J}{R_\Omega};$
	        \item If $K_\Omega>0$, $b>0$ and $R_J\le 1$ then
	            \[
	                \dfrac{1}{K_\Omega+1}=\dfrac{R_J}{(K_\Omega+1)R_J}
	                \le\dfrac{R_J}{R_\Omega}\le\dfrac{1}{R_\Omega}.
	            \]
        \end{itemize}
        Then, by Lemma \ref{lema.estimates} and \eqref{eq:cl15}, the theorem follows.
     \end{proof}

To give a complete characterization of $\Lambda$,
     we have to study the case $K_\Omega>0$, $b>0$ and $R_J>1$.
     In this case, we observe the big difference between our nonlocal operator and the
     $p-$Laplacian and the fractional $p-$Laplacian (see \cite{JL,JLM,LL}) due to
     the fact that $\Lambda$ is not achieved by the cone.

   	\begin{theorem}\label{thm:sco}
		 Let $K_\Omega>0$, $b>0$ and $R_J>1$.  If $b\ge 1$ then
		 \[
		 	\Lambda=\frac1{K_\Omega+1}.
		 \]	
	\end{theorem}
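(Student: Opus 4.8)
The plan is to obtain the lower bound for free from Lemma~\ref{lema.estimates} and to spend all the effort on a matching upper bound via a carefully designed test function. Since $b>0$ here, Lemma~\ref{lema.estimates} already yields $\Lambda\ge\max\{1/R_\Omega,\,1/(K_\Omega+b),\,1/(K_\Omega+1)\}\ge 1/(K_\Omega+1)$, so it only remains to produce a function $u\in W^{1,\infty}_0(\Omega)$ with $\max\{\|\nabla u\|_\infty,[u]_J\}\le (1/(K_\Omega+1))\,\|u\|_\infty$. Here one cannot reuse the cone from Theorem~\ref{thm:pco}: that function has $[v]_J=R_J/R_\Omega$, which exceeds $1/(K_\Omega+1)$ as soon as $b<R_J$ — this is the ``$\Lambda$ is not attained by the cone'' phenomenon pointed out just above the statement. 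I would instead use a \emph{staircase cone}.

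Concretely, I would pick an incenter $x_0\in\Omega$ (so $B(x_0,R_\Omega)\subset\Omega$) and set $u(x)=g(|x-x_0|)$, where, writing $c\coloneqq 1/(K_\Omega+1)$, the profile $g\colon[0,\infty)\to[0,1]$ is piecewise linear and built from: an initial flat plateau $g\equiv1$ on $[0,b-1]$ (empty or degenerate when $b=1$), then $K_\Omega+1$ ``steps'', the $i$-th step being a descending ramp of slope $-c$ and horizontal length $1$ followed by a flat plateau of length $R_J-1$ (the trailing plateau omitted for $i=K_\Omega+1$), and finally $g\equiv0$ on $[R_\Omega,\infty)$. Then the $i$-th ramp occupies $[(b-1)+(i-1)R_J,\,b+(i-1)R_J]$ and decreases $g$ from $1-(i-1)c$ to $1-ic$; in particular the last ramp lies on $[R_\Omega-1,R_\Omega]$ and brings $g$ from $c$ down to $0$. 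A length count gives $(b-1)+(K_\Omega+1)+K_\Omega(R_J-1)=K_\Omega R_J+b=R_\Omega$, and the hypotheses $b\ge1$ and $R_J>1$ are exactly what make all the plateau lengths nonnegative, i.e.\ what make this profile fit inside the inball. Since $g(R_\Omega)=0$, $u$ vanishes outside $B(x_0,R_\Omega)$, hence $u\in W^{1,\infty}_0(\Omega)$.

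Then I would check the three relevant quantities. One has $\|u\|_\infty=g(0)=1$ and $\|\nabla u\|_\infty=c$ because the only slopes of $g$ are $0$ and $-c$. For the nonlocal seminorm, using radial symmetry (every pair $s,t\ge0$ with $|s-t|\le R_J$ is realized by some $x,y$ with $x-y\in\mathrm{supp}(J)$) and monotonicity of $g$, one gets $[u]_J=\sup_{s\ge0}\bigl(g(s)-g(s+R_J)\bigr)=c\cdot\sup_{s\ge0}\bigl|E\cap[s,s+R_J]\bigr|$, where $E$ is the union of the $K_\Omega+1$ ramp intervals. These are unit-length intervals whose left endpoints are spaced exactly $R_J$ apart, so every window of length $R_J$ meets $E$ in measure exactly $1$; hence $[u]_J=c$, the Rayleigh-type quotient of $u$ equals $c=1/(K_\Omega+1)$, so $\Lambda\le 1/(K_\Omega+1)$, which together with the lower bound finishes the proof.

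The step I expect to be the main obstacle is precisely the window estimate $\sup_s\bigl|E\cap[s,s+R_J]\bigr|=1$: one must verify that no interval of length $R_J$ ever sees more than one ramp's worth of length — a short case analysis separating windows that fall across a single ramp and its following gap (measure $1$) from windows that straddle the tail of one ramp and the head of the next (where the two partial overlaps must be seen to add up to $1$). This is exactly why the ramps are taken of length $1$ (the shortest length compatible with the gradient bound $c$) and are placed with period $R_J$; the remainder is just bookkeeping of lengths, which closes because $b\ge1$.
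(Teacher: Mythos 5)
Your proof is correct and follows essentially the same strategy as the paper: the lower bound is quoted from Lemma \ref{lema.estimates}, and the upper bound comes from a radial staircase test function with $K_\Omega+1$ descending segments placed with period $R_J$, so that $\|\nabla u\|_\infty\le \tfrac{1}{K_\Omega+1}$ and every window of length $R_J$ captures exactly one step's drop, giving $[u]_J=\tfrac{1}{K_\Omega+1}$. The only difference is cosmetic: the paper uses ramps of length $b$ (slope $\tfrac{1}{(K_\Omega+1)b}$, which is where $b\ge1$ enters), while you use unit-length ramps of slope exactly $\tfrac{1}{K_\Omega+1}$ preceded by a plateau of length $b-1$; both yield the same Rayleigh quotient.
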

	\begin{proof}
		By Lemma \ref{lema.estimates}, we have
		\[
			\Lambda\ge\dfrac1{K_\Omega+1}.
		\]
		Therefore, we only need to show the inverse inequality.
		
		Take again $x_0\in \Omega$ such that
        $R_\Omega=\mbox{dist}(x_0,\partial\Omega)$ and we define:
        \begin{itemize}
			\item For  $i\in\left\{0,\dots,K_\Omega-1\right\}$
				\[
					A_i=[iR_J,iR_J+b], \quad
					B_i=[iR_J+b,(i+1)R_J]
				\]
				\[
					h_i(t)=
						\left(1-\dfrac{i+\chi_{B_i}(t)}
						{K_\Omega+1}\right)-\dfrac{1}{K_\Omega+1}
						\dfrac{t-iR_J}{b} \chi_{A_i}(t);
				\]
    		\item For $i=K_\Omega$
    			\[
						I= [R_\Omega-b,R_\Omega),\quad   			
    				h_{K_\Omega}(t)=\dfrac{1}{K_\Omega+1}\left(
    				\dfrac{R_\Omega-t}{R-K_\Omega R_J}\right)\chi_I(t);
    			\]
    		\item Finally, we take
    			\[
    				h(t)=\sum_{i=0}^{K_\Omega} h_i(t)\quad\text{ and } \quad w(x)=h(|x-x_0|) .
    			\]
		\end{itemize}

\begin{center}
\begin{tikzpicture}

				\newcommand{\radiusx}{5}
				\newcommand{\radiusy}{.8}
				\newcommand{\height}{6}

				\coordinate (a) at
				(-{\radiusx*sqrt(1-(\radiusy/\height)*(\radiusy/\height))},
				{\radiusy*(\radiusy/\height)});

				\coordinate (b) at
				({\radiusx*sqrt(1-(\radiusy/\height)*(\radiusy/\height))},
				{\radiusy*(\radiusy/\height)});

				
				\path[fill=gray!30](-5,0)--(-4,2)--(-3,2)--(-2,4)--(-1,4)--(0,6)
				--(1,4)--(2,4)--(3,2)--(4,2)--(5,0);
				\draw (-5,0)--(-4,2)--(-3,2)--(-2,4)--(-1,4)--(0,6)
				--(1,4)--(2,4)--(3,2)--(4,2)--(5,0);
				
				\fill[gray!50] circle (\radiusx{} and \radiusy);

				\begin{scope}
					\clip ([xshift=-2mm]a) rectangle ($(b)+(1mm,-2*\radiusy)$);
					\draw circle (\radiusx{} and \radiusy);
					\end{scope}

				\begin{scope}
					\clip ([xshift=-2mm]a) rectangle ($(b)+(1mm,2*\radiusy)$);
					\draw[dashed] circle (\radiusx{} and \radiusy);
				\end{scope}

				\draw (0,.15)-|(.15,0);
				\draw (0,0) node[left] {$x_0$};
				\draw [dashed] (5,0) -- (0,0) -- (0,6) node[above] {$1$};
				\draw (2.5,5) node[below] {$w(x)$};
				\draw (5.3,0) node[below] {$R_\Omega$};
				\draw [dashed] (1,0) node[below] {$b$} -- (1,4) -- (0,4)
				node[left] {$\frac{2}3$};
				\draw [dashed] (2,0) node[below] {$R_J$} -- (2,4);
				\draw [dashed] (3,0) node[below] {$R_J+b$} -- (3,2) -- (0,2)
				 node[left] {$\frac{1}3$};
				 \draw [dashed] (4,0) node[below] {$2R_J$} -- (4,2);
			 \end{tikzpicture}
			
			 The function $w$ when $K_\Omega=2.$
		\end{center}

		Observe that $w\in W^{1,\infty}_0(\Omega),$ $\|w\|_\infty=1,$
		\[
			\|\nabla w\|_\infty=\dfrac{1}{K_\Omega+1}\dfrac1{b}
			\quad \text{ and }\quad
			[w]_J=\dfrac{1}{K_\Omega+1}.
		\]
		
		Then, if $b\ge1,$ we have
		\[
			\Lambda\le\max\Big\{ \|\nabla w\|_\infty,[w]_J \Big\}=\dfrac{1}{K_\Omega+1}.
		\]
	\end{proof}

	\begin{theorem}\label{thm:tco}
		 Let $K_\Omega>0$, $b>0$ and $R_J>1$. If $b<1$ then
		 \[
		 	\Lambda=\dfrac1{K_\Omega+b}.
		 \]	
	\end{theorem}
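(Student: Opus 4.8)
The plan is to prove the two inequalities $\Lambda\ge\frac1{K_\Omega+b}$ and $\Lambda\le\frac1{K_\Omega+b}$ separately. For the lower bound I would just read it off Lemma \ref{lema.estimates}: since $b<1$ we have $K_\Omega+b<K_\Omega+1$, hence $\frac1{K_\Omega+b}>\frac1{K_\Omega+1}$; and since $R_J>1$ and $K_\Omega\ge1$ we have $R_\Omega=K_\Omega R_J+b>K_\Omega+b$, hence $\frac1{K_\Omega+b}>\frac1{R_\Omega}$. So the estimate of Lemma \ref{lema.estimates} for $b\ne0$ collapses to $\Lambda\ge\frac1{K_\Omega+b}$.

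The real content is the upper bound, which requires exhibiting a competitor $w\in W^{1,\infty}_0(\Omega)$ with
\[
\frac{\max\big\{\|\nabla w\|_\infty,[w]_J\big\}}{\|w\|_\infty}\le\frac1{K_\Omega+b}.
\]
As observed just before Theorem \ref{thm:sco}, the cone is not optimal here, so, in the spirit of the construction in the proof of Theorem \ref{thm:sco}, I would take a radial function centered at an incenter $x_0$ (a point with $\mbox{dist}(x_0,\partial\Omega)=R_\Omega$), but now spread the descent out in a prescribed way. Set $L\coloneqq\frac1{K_\Omega+b}$ and let
\[
S\coloneqq\bigcup_{i=0}^{K_\Omega-1}\big[iR_J,\,iR_J+1\big]\ \cup\ \big[K_\Omega R_J,\,R_\Omega\big].
\]
Because $R_J>1$ these intervals are pairwise disjoint and contained in $[0,R_\Omega]$, with $|S|=K_\Omega\cdot1+b=K_\Omega+b$. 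Define the one-dimensional profile $h(t)\coloneqq\big(1-L\,|S\cap[0,t]|\big)_+$ and put $w(x)\coloneqq h(|x-x_0|)$. Then $h$ is non-increasing and Lipschitz, $h(0)=1$, and $h(R_\Omega)=1-L(K_\Omega+b)=0$, so $w\in W^{1,\infty}_0(\Omega)$ and $\|w\|_\infty=1$; moreover $|\nabla w|$ equals $L$ when $|x-x_0|$ lies in the interior of $S$ and $0$ otherwise, so $\|\nabla w\|_\infty=L$.

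It then remains to verify that $[w]_J\le L$, which is the heart of the matter. Given $x,y$ with $x-y\in\mbox{supp}(J)$, that is $|x-y|\le R_J$, write $t=|x-x_0|$ and $s=|y-x_0|$ with $t\le s$; then $s-t\le|x-y|\le R_J$ and, using that $h$ is non-increasing and vanishes on $[R_\Omega,\infty)$,
\[
|w(x)-w(y)|=h(t)-h(s)=L\,\big|S\cap[t,\min\{s,R_\Omega\}]\big|.
\]
Since $S$ is contained in the $R_J$-periodic set $S'=\bigcup_{i\ge0}[iR_J,iR_J+1]$, and the intersection of $S'$ with any interval of length $R_J$ has measure exactly $1$, while $[t,\min\{s,R_\Omega\}]\subset[t,t+R_J]$, we obtain $\big|S\cap[t,\min\{s,R_\Omega\}]\big|\le1$, hence $|w(x)-w(y)|\le L$. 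Therefore $[w]_J\le L$ and $\Lambda\le L=\frac1{K_\Omega+b}$, which together with the lower bound finishes the proof.

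The delicate point is precisely the choice of the steep set $S$: the drop of the profile must be performed on subintervals of length exactly $1$ inside each period of length $R_J$ (and on a final subinterval of length $b$), so that the gradient never exceeds $\frac1{K_\Omega+b}$ while, simultaneously, every window of length $R_J$ sees a total drop of at most $\frac1{K_\Omega+b}$. This balance is possible exactly because $R_J>1>b$, and it is where the interplay between the local and the nonlocal terms genuinely enters.
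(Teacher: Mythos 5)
Your proposal is correct and follows essentially the same route as the paper: the lower bound is read off Lemma \ref{lema.estimates} exactly as in the text, and the upper bound comes from an explicit radial competitor centered at an incenter whose profile drops by exactly $\tfrac{1}{K_\Omega+b}$ over each window of length $R_J$, so that $\|\nabla w\|_\infty=[w]_J=\tfrac{1}{K_\Omega+b}$. The only (immaterial) difference is the shape of the profile: the paper's $f$ descends with slope $\tfrac{1}{K_\Omega+b}$ on $[iR_J,iR_J+b]$ and with the gentler slope $\tfrac{1-b}{(K_\Omega+b)(R_J-b)}$ on the rest of each period, whereas yours descends with slope $\tfrac{1}{K_\Omega+b}$ on $[iR_J,iR_J+1]$ and is flat elsewhere, with the seminorm bound verified by the same periodicity observation.
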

	\begin{proof}
Let us observe that as $b<1<R_J$ we get
$$
R_\Omega+b<R_\Omega+1,\qquad K_\Omega+b<K_\Omega R_J+b=R_\Omega
$$
then by Lemma \ref{lema.estimates}
$$
\Lambda\ge \dfrac1{K_\Omega+b}
$$
Therefore, we only need to show the inverse inequality.
		
		Take again $x_0\in \Omega$ such that
        $R_\Omega=\mbox{dist}(x_0,\partial\Omega)$ and we define:
						


       \begin{itemize}
			\item For  $i\in\left\{0,\dots,K_\Omega-1\right\}$
				\[
					A_i=[iR_J,iR_J+b], \quad
					B_i=[iR_J+b,(i+1)R_J]\quad
                    y_i=1-\frac{i}{K_\Omega+b}
				\]
				\[
					f(t)=
                        \left\{\begin{array}{ll}
                            y_i-\dfrac{1}{K_\Omega+b} (t-i R_J)\qquad & t\in A_i,\\ \\
                            y_i-\dfrac{b}{K_\Omega+b}-\dfrac{1-b}{(K_\Omega+b)(R_J-b)}
                            (t-i R_J-b)& t\in B_i;
                        \end{array}\right.						
				\]
    		\item For $i=K_\Omega$
    			\[ 			
    				f(t)=\dfrac{b}{K_\Omega+b}-\frac{1}{K_\Omega+b}
    				(t- R_\Omega+b)\quad t\in [R_\Omega-b,R_\Omega);
    			\]
             \item $z\colon\mathbb{R}^N\to\mathbb{R}$
			\[
				z(x)= \begin{cases}
						f(|x-x_0|) &\text{ if } |x-x_0|\le R_\Omega,\\
						0 &\text{ if } |x-x_0|> R_\Omega.\\
						\end{cases}
			\]
		\end{itemize}
		
		\begin{center}
	   \begin{tikzpicture}[scale=3]
				\draw (0,1)--(0.6,2/2.6)--(1.2,1.6/2.6)--(1.8,1/2.6) -- (2.4,.6/2.6) -- (3,0);
				\draw (0,1) node[left] {$1$}  -- (0,0)  node[below] {$0$} --
				(0.6,0) node[below] {$b$}
				-- (1.2,0)  node[below] {$R_J$} -- (1.8,0)
				 node[below] {$R_J+b$} -- (2.4,0) node[below] {$2R_J$}-- (3,0)
				 node[below] {$R_\Omega$};
				 \draw [dashed] (.6,0) -- (0.6,2/2.6) --(0,2/2.6)
				 node[left] {$\frac2{2+b}$};
				 \draw [dashed] (1.2,0) -- (1.2,1.6/2.6) --(0,1.6/2.6)
				 node[left] {$\frac{1+b}{2+b}$};
				 \draw [dashed] (1.8,0) -- (1.8,1/2.6) --(0,1/2.6)
				 node[left] {$\frac{1}{2+b}$};
				 \draw [dashed] (2.4,0) -- (2.4,0.6/2.6) --(0,.6/2.6)
				 node[left] {$\frac{b}{2+b}$};
		\end{tikzpicture}
		
		The function $f$ when $K_\Omega=2.$
		\end{center}
		Observe that $z\in W^{1,\infty}_0(\Omega),$ $\|z\|_\infty=1,$
		 \[
		 	[z]_J=\dfrac{1}{K_\Omega+b}
		 \]
		and
		\[
			\|\nabla z\|_\infty=\max\left\{
			\dfrac1{K_\Omega+b},
			\dfrac{1-b}{(K_\Omega+b)
			(R_J-b)}
			\right\}=\frac{1}{K_\Omega+b}
		\]
        because $R_J>1$. Thus,
		\[
			\Lambda\le \dfrac1{K_\Omega+b}
		\]
		and so finish the proof.
	\end{proof}

\section{The limit problem}
    Our  last  aim  is  show  that $u_\infty$
    is  a  viscosity  solution  of
    \begin{equation}
      \label{eq:limite}
            \begin{cases}
	            \max\Big\{M_1(u), M_2(u)\Big\}=0 &\text{ in }\Omega,\\
	            u =0 &\text{ on }\partial\Omega,
            \end{cases}
    \end{equation}
    where
    $$
    	    M_1(u(x))\coloneqq\min \Big\{-\Lambda u(x)-\mathcal{L}^-_{J,\infty}u(x),
	        -\mathcal{L}_{J,\infty}u(x), -\mathcal L_{\infty,J}^- u(x)-|\nabla u(x)|
	        \Big\}
	        $$
	        and
	           \begin{align*}
            & M_2(u(x))\coloneqq
             \min\Big \{|\nabla u(x)|-\Lambda u(x),
                -\Delta_\infty u(x),
                |\nabla u(x)|-\mathcal L_{\infty,J}^+u(x),\\
                &\qquad\qquad \qquad \qquad\qquad |\nabla u (x)|+\mathcal L_{\infty,J}^-u(x)
                \Big\}.
     \end{align*}

The following result follows from \cite[Lemma 6.5]{CHM}. We include the proof for reader's convenience.
\begin{lemma}\label{lem_CHM}
  Let $\phi\in C_0^1(\overline\Omega)$ extended by zero outside $\Omega$ and $x_p\to x_0$ as $p\to\infty$. Then,
  $$
  \left(\int_{\mathbb{R}^N}J(x_p-y)(\phi(y)-\phi(x_p))_+^{p-1}\, dy \right)^{\frac1{p-1}}\to \mathcal L^+_{J,\infty}\phi(x_0)
  $$
  and
  $$
  \left(\int_{\mathbb{R}^N}J(x_p-y)(\phi(y)-\phi(x_p))_-^{p-1}\, dy \right)^{\frac1{p-1}} \to -\mathcal L^-_{J,\infty}\phi(x_0).
  $$
\end{lemma}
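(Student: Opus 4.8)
The plan is to recognize these as limits of rescaled $L^{p-1}$-type norms of the nonnegative (resp. nonpositive) part of the increment $\phi(y)-\phi(x_p)$ against the probability-like weight $J(x_p-\cdot)$, and to control them from above and below separately. First I would fix notation: write
\[
  I_p^+ := \left(\int_{\mathbb{R}^N}J(x_p-y)\,(\phi(y)-\phi(x_p))_+^{p-1}\,dy\right)^{\frac{1}{p-1}},
\]
and recall that $\operatorname{supp}(J)=\overline{B(0,R_J)}$, so the integral only sees $y$ with $|x_p-y|\le R_J$. Since $x_p\to x_0$ and $\phi$ is continuous, for the \emph{upper} bound I would use $J\le \|J\|_\infty$ together with $(\phi(y)-\phi(x_p))_+\le \sup_{|x_0-y|\le R_J+o(1)}(\phi(y)-\phi(x_0))_+ + o(1)$, so that
\[
  I_p^+\le \big(\|J\|_\infty\,|B(0,R_J)|\big)^{\frac{1}{p-1}}\Big(\sup_{x_0-z\in\operatorname{supp}(J)}(\phi(z)-\phi(x_0))_+ + o(1)\Big),
\]
and the prefactor tends to $1$ as $p\to\infty$; this gives $\limsup_p I_p^+\le \sup_{x_0-z\in\operatorname{supp}(J)}(\phi(z)-\phi(x_0))$, because the supremum of $(\phi(z)-\phi(x_0))_+$ over $z$ with $x_0-z\in\operatorname{supp}(J)$ equals $\big(\mathcal L^+_{J,\infty}\phi(x_0)\big)_+=\mathcal L^+_{J,\infty}\phi(x_0)$ unless this sup is negative — but taking $z=x_0$ shows the sup is $\ge 0$, so it equals $\mathcal L^+_{J,\infty}\phi(x_0)$.

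For the \emph{lower} bound I would localize near a near-maximizer: pick $z_0$ with $x_0-z_0\in\operatorname{supp}(J)$ and $\phi(z_0)-\phi(x_0)$ within $\varepsilon$ of $\mathcal L^+_{J,\infty}\phi(x_0)$, and note that by the assumption $J(x)>0$ for $|x|<R_J$ (and $J$ continuous) one can find a small ball $B(z_0,\rho)$ on which $J(x_p-\cdot)\ge \delta>0$ and $\phi(\cdot)-\phi(x_p)\ge \mathcal L^+_{J,\infty}\phi(x_0)-2\varepsilon$ for all large $p$; restricting the integral to this ball yields
\[
  I_p^+\ge \big(\delta\,|B(z_0,\rho)|\big)^{\frac{1}{p-1}}\big(\mathcal L^+_{J,\infty}\phi(x_0)-2\varepsilon\big)_+\ \longrightarrow\ \big(\mathcal L^+_{J,\infty}\phi(x_0)-2\varepsilon\big)_+.
\]
Letting $\varepsilon\to0$ gives $\liminf_p I_p^+\ge \mathcal L^+_{J,\infty}\phi(x_0)$ (again using $\mathcal L^+_{J,\infty}\phi(x_0)\ge0$), which together with the upper bound proves the first convergence. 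The second statement follows by applying the first to $-\phi$: indeed $(\phi(y)-\phi(x_p))_- = (-\phi(y)-(-\phi(x_p)))_+$, so the corresponding integral converges to $\mathcal L^+_{J,\infty}(-\phi)(x_0) = \sup_{x_0-z\in\operatorname{supp}(J)}(-\phi(z)+\phi(x_0)) = -\inf_{x_0-z\in\operatorname{supp}(J)}(\phi(z)-\phi(x_0)) = -\mathcal L^-_{J,\infty}\phi(x_0)$.

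The only delicate point — the "hard part" — is the lower bound when $\mathcal L^+_{J,\infty}\phi(x_0)$ is attained only on the boundary sphere $|x_0-z|=R_J$, where $J$ may vanish; here one must use that the maximizer set is closed and that $\phi$ is continuous to slide the localization ball slightly inward so that $J(x_p-\cdot)$ stays bounded below while $\phi(\cdot)-\phi(x_p)$ is still close to the supremum, at the cost of an arbitrarily small loss $\varepsilon$. Also one should note the harmless bookkeeping that $x_p-y\in\operatorname{supp}(J)$ versus $x_0-y\in\operatorname{supp}(J)$ differ by a vanishing perturbation, which only changes the relevant suprema by $o(1)$ by uniform continuity of $\phi$. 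All remaining steps are the routine estimates indicated above.
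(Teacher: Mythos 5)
Your proposal is correct and follows essentially the same route as the paper: a crude upper bound whose multiplicative prefactor disappears under the $(p-1)$-th root (the paper uses $\int J=1$ where you use $\|J\|_\infty|B(0,R_J)|$, an immaterial difference), a lower bound by localizing the integral on a small ball around a near-maximizer where $J(x_p-\cdot)$ is bounded below, and the reduction of the second limit to the first via $-\phi$. Your explicit remarks that $\mathcal L^+_{J,\infty}\phi(x_0)\ge 0$ (take $z=x_0$) and that a near-maximizer on the sphere $|x_0-z|=R_J$ can be slid inward by continuity are points the paper glosses over, and they are handled correctly.
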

\begin{proof}
We just examine the first convergence, since the same argument can be used to analyze the second one. Let us define
$$
A_p=\int_{\mathbb{R}^N}J(x_p-y)(\phi(y)-\phi(x_p))_+^{p-1}\, dy \quad\mbox{and}\quad
\mu=\sup_{x_0-y\in supp(J)} \Big(\phi(y)-\phi(x_0)\Big).
$$

The upper bound follows easily
$$
A_p^{\frac1{p-1}} \le \sup_{x_p-y\in supp(J)} \Big(\phi(y)-\phi(x_p)\Big)_+ \to\mu \qquad \mbox{as }p\to\infty
$$
To obtain the lower estimate we assume that $\mu>0$. Observe that as $A_p\ge0$, if $\mu=0$ we are done. Taking $0<t<\mu$ there exists $y_0\in B_{R_J}(x_0)$ such that $\phi(y_0)-\phi(x_0)>t$. Since $\phi$ is continuous there exist $\delta>0$ such that if $x\in B_{\delta}(x_0)$ and $y\in B_{\delta}(y_0)\subset B_{R_J}(x_0)$ then $\phi(y)-\phi(x)>t$. Therefore
$$
A_p^{\frac1{p-1}}\ge t \left(\int_{B_{\delta}(y_0)} J(x_p-y)\,dy\right)^{\frac{1}{p-1}} \to t \quad \mbox{as } p\to\infty.
$$
Finally, taking $t\to\mu$, we obtain the desired result.
\end{proof}
    \begin{proof}[Proof of Theorem \ref{teo.viscosa.intro}]
        We begin the proof, showing that $u_\infty$ is a viscosity supersolution of
        \eqref{eq:limite}.
	
	    Fix a open set $U\subset\Omega,$  a point $x_0\in U$ and a function
		$\psi\in C^2(U)$ such that $\psi(x_0)=u(x_0),$ $\psi< u$ in $U\setminus\{x_0\}.$
		
		Proceeding as in \cite{JLM}, we can show that there exist
		$\{p_n\}_{n\in\mathbb{N}}$ and $\{x_n\}_{n\in\mathbb{N}}\subset \Omega$ such that
		\[
		    p_n\to\infty, \quad x_n\to x_0\quad\text{ as } n\to\infty,
		\]
		\[
		    u_{p_n}\to u_\infty\quad \text{uniformly in } \overline{\Omega}
		    \quad\text{ as } n\to\infty,
		\]
		and $u_{p_n}-\psi$ attains its minimum at point $x_n$ for all $n\in\mathbb{N}.$
		
		Since $u_{p_n}$ is a viscosity solution of \eqref{eq:EP} with
		$\lambda=\lambda_1(p),$ we get
		\[
		    \mathcal{L}_{J,p_n} w_{n}(x_n)\ge
		    \lambda_1(p_n)\left(u_{p_n}(x_n)\right)^{p_n-1}\quad \forall
		    n\in\mathbb{N}
		\]
		where
		\[
						w_n(x)=
							\begin{cases}
								\psi(x) & \text{if } x\in U,\\
								u_{p_n}(x) & \text{if } x\in U^c,
							\end{cases}
	    \]	
	    for any $n\in\mathbb{N}.$
	    Thus
	    \begin{align*}
		    &\lambda_1(p_n)\left(u_{p_n}\right)^{p_n-1}
		    \le \mathcal{L}_{J,p_n} w_n(x_n)\\
		    &=-\left[
		        |\nabla w_n(x_n)|^{p_n-2}\Delta w_n(x_n)+(p_n-2) |\nabla w_n(x_n)|^{p_n-4}
		        \Delta_\infty w_n(x_n) \right] \\
		    &\quad-2\int_{\mathbb{R}^N}J(x_n-y)(w_n(y)-w_n(x_n))_+^{p_n-1}\, dy\\
		    &\quad+2\int_{\mathbb{R}^N}J(x_n-y)(w_n(y)-w_n(x_n))_-^{p_n-1}\, dy.
        \end{align*}
        Then
        \begin{equation}\label{eq:vs1}
	             \begin{aligned}
		                &\lambda_1(p_n)\left(u_{p_n}(x_n)\right)^{p_n-1}
		                +2\int_{\mathbb{R}^N}J(x_n-y)(w_n(y)-w_n(x_n))_+^{p_n-1}\, dy
		                \\
		                 &\le -\left[
		               |\nabla w_n(x_n)|^{p_n-2}\Delta w_n(x_n)+(p_n-2)
		               |\nabla w_n(x_n)|^{p_n-4}
		            \Delta_\infty w_{n}(x_n) \right] \\
		            &\qquad +2\int_{\mathbb{R}^N}J(x_n-y)(w_n(y)-w_n(x_n))_-^{p_n-1}\, dy.
                \end{aligned}
        \end{equation}

        Suppose that there is a subsequence $\{x_{n_j}\}_{j\in\mathbb{N}}$ such that
        $x_j\to x_0$ as $j\to \infty$ and
        \[
            \begin{aligned}
	             &\int_{\mathbb{R}^N}J(x_{n_j}-y)(w_{n_j}(y)-w_{n_j}(x_{n_j}))_-^{p_{n_j}-1}\, dy
	            \\
                  &   \ge - \left[
		               |\nabla w_{n_j}(x_{n_j})|^{p_{n_j}-2}\Delta
		               w_{n_j}(x_{n_j})+(p_{n_j}-2)
		               |\nabla w_{n_j}(x_{n_j})|^{p_{n_j}-4}
		            \Delta_\infty w_{n_j}(x_{n_j}) \right].
            \end{aligned}
       \]
         Then, using \eqref{eq:vs1}, we get
         \begin{align*}
	             &\left(
	             \int_{\mathbb{R}^N}J(x_{n_j}-y)(w_{n_j}(y)-w_{n_j}(x_{n_j}))_-^{p_{n_j}-1}\, dy
	             \right)^{\frac1{p_{n_j-1}}}
	             \\
                  & \ge \left((p_{n_j}-2)
                  |\nabla w_{n_j}(x_{n_j})|^{p_{n_j}-4}\right)^{\frac{1}{p_{n_j}-1}}\left|
		               \dfrac{|\nabla w_{n_j}(x_{n_j})|^{2}}{p_{n_j}-2}
		               \Delta w_{n_j}(x_{n_j})+
		            \Delta_\infty w_{n_j}(x_{n_j}) \right|^{\frac{1}{p_{n_j}-1}},
            \end{align*}
           and 
        \begin{align*}
	            &\Bigg(\lambda_1(p_{n_j})
	            \left(u_{p_{n_j}} (x_{n_j})\right)^{p_{n_j}-1}\\
		           &\qquad +2
		           \int_{\mathbb{R}^N}J(x_{n_j}-y)(w_{n_j}(y)-
		           w_{n_j} (x_{n_j})(x_{n_j}))_+^{p_{n_j}-1}\, dy
		           \Bigg)^{\frac1{p_{n_j}-1}}\\
		            &\le
		           \left(3 \int_{\mathbb{R}^N}J(x_{n_j}-y)(w_{n_j}(y)-
		           w_{n_j} (x_{n_j})(x_{n_j}))_-^{p_{n_j}-1}
		           \, dy  \right)^{\frac1{p_{n_j}-1}}
        \end{align*}
         for any $j\in\mathbb{N}.$
         Passing to the limit as $n_j\to \infty$ by Lemma \ref{lem_CHM}, we get
         \begin{align*}
            &|\nabla w(x_0)|\le -\mathcal{L}^-_{J,\infty}w(x_0),\\
	        &\max\Big\{\Lambda w(x_0),\mathcal{L}^+_{J,\infty}w(x_0)\Big\}\le
	            -\mathcal{L}^-_{J,\infty}w(x_0),
        \end{align*}
         where
         \[
						w(x)=
							\begin{cases}
								\psi(x) & \text{if } x\in U,\\
								u_{\infty}(x) & \text{if } x\in U^c.
							\end{cases}
	    \]	
         Therefore,
         \[
	        \min\Big\{-\Lambda w(x_0)-\mathcal{L}^-_{J,\infty}w(x_0),
	        -\mathcal{L}_{J,\infty}w(x_0),
            -\mathcal L_{\infty,J}^- w(x_0)-|\nabla w(x_0)|\Big\}\ge 0,
         \]
         that is
         \begin{equation}\label{eq:vs2}
            M_1(w(x_0))\ge 0.
         \end{equation}

         On the other hand, if the subsequence $\{x_{n_j}\}_{j\in\mathbb{N}}$
         does not exist, then there is $n_0$  such that
         \[
            \begin{aligned}
	             &\int_{\mathbb{R}^N}J(x_n-y)(w_{n}(y)-w_{n}(x_{n}))_-^{p_{n}-1}\, dy
	             \\
                  & \le -\left[
		               |\nabla w_{n}(x_{n})|^{p_{n}-2}\Delta
		               w_{n_j}(x_{n})+(p_{n}-2)
		               |\nabla w_{n}(x_{n})|^{p_n-4}
		            \Delta_\infty w_{n}(x_{n}) \right]
            \end{aligned}
       \]
        for all $n\ge n_0$.
        Then, from \eqref{eq:vs1} we get that $\nabla w_{n}(x_{n})\neq 0,$
        \begin{align*}
	             &\dfrac{|\nabla w_n(x_n)|^3}{p_n-2}\left(\dfrac{\left(
	             \int_{\mathbb{R}^N}J(x_n-y)(w_{n}(y)-w_{n}(x_{n}))_-^{p_{n}-1}\, dy
	             \right)^{\frac{1}{p_{n-1}}}}{|\nabla w_n(x_n)|}\right)^{p_n-1}
	             \\
                  &\le - \dfrac{|\nabla w_{n}(x_{n})|^{2}}{p_{n}-2}
		               \Delta w_{n}(x_{n})
		            - \Delta_\infty w_{n}(x_{n}) ,
        \end{align*}
        and
         \begin{equation}\label{eq:vs3}
        \begin{aligned}
	            &
	            \dfrac{\lambda_1(p_{n})^{\frac{4}{p_{n}}}
	            (u_{p_n}(x_{n}))^3}{3(p_{n}-2)}
	            \left(
	     \dfrac{\lambda_1(p_{n})^{\frac{1}{p_{n}}}u_{p_n}(x_{n})}
	     {|\nabla w_n(x_{n})|}
	            \right)^{p_{n}-4}\\
		           &\qquad +\dfrac{2|\nabla w_{n}(x_n)|^3}{3(p_{n}-2)}\left(\dfrac{\left(
		           \int_{\mathbb{R}^N}J(x_n-y)
		           (w_{n}(y)-w_n(x_{n}))_+^{p_{n}-1}\, dy
		           \right)^{\frac1{p_n-1}}}{|\nabla w_n(x_n)|}\right)^{p_n-1}\\
		            &\le-3\left[
		        \dfrac{|\nabla w_n(x_{n})|^{2}}{p_{n}-2}
		        \Delta w_n(x_{n})+\Delta_\infty w_n(x_{n}) \right].
        \end{aligned}
       \end{equation}
        Since in the previous inequalities the right hand side is bounded, we get
        \begin{equation}\label{eq:vs4}
	        \begin{aligned}
	            \dfrac{-\mathcal L_{\infty,J}^-w(x_0)}{|\nabla w(x_0)|}&\le1,\\
	            \dfrac{\Lambda w(x_{0})}
	            {|\nabla w(x_{_0})|}&\le 1\\
	             \dfrac{-\mathcal L_{\infty,J}^+w(x_0)}{|\nabla w(x_0)|}&\le1,\\
            \end{aligned}
        \end{equation}

        Therefore, passing to the limit in \eqref{eq:vs3},
        using again Lemma \ref{lem_CHM} we obtain
        \[
	         \begin{array}{rl}
            &\min\Big\{|\nabla w(x_0)|-\Lambda w(x_0),
                -\Delta_\infty w(x_0),
                |\nabla w(x_0)|-\mathcal L_{\infty,J}^+w(x_0),\\[10pt]
                &\qquad \qquad |\nabla w (x_0)|+\mathcal L_{\infty,J}^-w(x_0)
                \Big\}\ge0
            \end{array}
        \]
        that is
        \begin{equation}\label{eq:vs5}
	        M_2(w(x_0))\ge0.
        \end{equation}
        Therefore, by \eqref{eq:vs2} and \eqref{eq:vs5}
        \[
            \max\Big\{M_1(w(x_0)), M_2(w(x_0))\Big\}\ge0
        \]
        Then $u_\infty$ is a viscosity supersolution of \eqref{eq:limite}.

        To finish the proof, we need to show that $u_\infty$ is a viscosity
        subsolution of \eqref{eq:limite}.
        Fix a open set $U\subset\Omega,$  a point $x_0\in U$ and a function
		$\psi\in C^2(U)$ such that $\phi(x_0)=u(x_0),$ $\phi> u$ in $U\setminus\{x_0\}.$
        We want to show that
         \[
            \max\Big\{M_1(v(x_0)), M_2(v(x_0))\Big\}\le0
        \]
         where
         \[
						v(x)=
							\begin{cases}
								\phi(x) & \text{if } x\in U,\\
								u_{\infty}(x) & \text{if } x\in U^c.
							\end{cases}
	    \]	
	
	    Proceeding as in \cite{JLM}, we can show that there exist
		$\{p_n\}_{n\in\mathbb{N}}$ and $\{x_n\}_{n\in\mathbb{N}}\subset \Omega$ such that
		\[
		    p_n\to\infty, \quad x_n\to x_0\quad\text{ as } n\to\infty,
		\]
		\[
		    u_{p_n}\to u_\infty\quad \text{uniformly in } \overline{\Omega}
		    \quad\text{ as } n\to\infty,
		\]
		and $u_{p_n}-\phi$ attains its maximum at point $x_n$ for all $n\in\mathbb{N}.$
		
		Since $u_{p_n}$ is a viscosity solution of \eqref{eq:EP} with
		$\lambda=\lambda_1(p),$ we get
		\begin{equation}\label{eq:vs6}
	        \mathcal{L}_{J,p_n} v_{n}(x_n)\le
		    \lambda_1(p_n)\left(u_{p_n}(x_n)\right)^{p_n-1}\quad \forall
		    n\in\mathbb{N}
        \end{equation}
		where
		\[
						v_n(x)=
							\begin{cases}
								\phi(x) & \text{if } x\in U,\\
								u_{p_n}(x) & \text{if } x\in U^c,
							\end{cases}
	    \]	
	    for any $n\in\mathbb{N}.$ Then
	             \begin{align*}
		                &\lambda_1(p_n)\left(u_{p_n}(x_n)\right)^{p_n-1}
		                +2\int_{\mathbb{R}^N}J(x_n-y)(v_n(y)-v_n(x_n))_+^{p_n-1}\, dy
		                \\
		                 &\ge -\left[
		               |\nabla v_n(x_n)|^{p_n-2}\Delta v_n(x_n)+(p_n-2)
		               |\nabla v_n(x_n)|^{p_n-4}
		            \Delta_\infty v_{n}(x_n) \right] \\
		            &\qquad +2\int_{\mathbb{R}^N}J(x_n-y)(v_n(y)-v_n(x_n))_-^{p_n-1}\, dy,
                \end{align*}
            and therefore
	        \begin{align*}
		                &\Bigg(\lambda_1(p_n)\left(v_{p_n}(x_n)\right)^{p_n-1}
		                +2\int_{\mathbb{R}^N}J(x_n-y)(v_n(y)-v_n(x_n))_+^{p_n-1}\, dy
		                \\
		                 &\qquad + (p_n-2)
		               |\nabla v_n(x_n)|^{p_n-4}\left|
		               \dfrac{|\nabla v_n(x_n)|^{2}}{p_n-2}\Delta v_n(x_n)+
		            \Delta_\infty v_{n}(x_n) \right|\Bigg)^{\frac{1}{p_n-1}} \\
		            &\ge
		            \left(2\int_{\mathbb{R}^N}J(x_n-y)(v_n(y)-v_n(x_n))_-^{p_n-1}\, dy
		            \right)^{\frac1{p_n-1}}.
          \end{align*}
          Thus, passing to the limit 
            using again Lemma \ref{lem_CHM} we obtain
            \[
                \max\Big\{\Lambda v(x_0),\mathcal L_{\infty,J}^+v(x_0),|\nabla v(x_0)|\Big\}
                \ge-\mathcal L_{\infty,J}^-v(x_0).
            \]
           Hence
            \[
                0
                \ge\min\Big\{-\Lambda v(x_0)-\mathcal L_{\infty,J}^-v(x_0),
                -\mathcal L_{\infty,J} v(x_0),-|\nabla v(x_0)|
                -\mathcal L_{\infty,J}^-v(x_0)\Big\},
            \]
            that is
            \begin{equation}\label{eq:vs7}
	            M_1(v(x_0))\le0.
            \end{equation}

          On the other hand, if $|\nabla v(x_0)|=0$ then
          \begin{equation}\label{eq:vs8}
	            M_2(v(x_0))\le0.
          \end{equation}
          We now assume that  $|\nabla v(x_0)|>0$ and
          \begin{equation}\label{eq:vs9}
	            \min\Big\{|\nabla v(x_0)|-\Lambda v(x_0),
                |\nabla v(x_0)|-\mathcal L_{\infty,J}^+v(x_0),\\
                |\nabla v (x_0)|+\mathcal L_{\infty,J}^-v(x_0)
                \Big\}>0.
           \end{equation}

          By \eqref{eq:vs6}, we get
          \begin{align*}
	            &
	            \dfrac{\lambda_1(p_{n})^{\frac{4}{p_{n}}}
	            (u_{p_n}(x_{n}))^3}{3(p_{n}-2)}
	            \left(
	     \dfrac{\lambda_1(p_{n})^{\frac{1}{p_{n}}}u_{p_n}(x_{n})}
	     {|\nabla v_n(x_{n})|}
	            \right)^{p_{n}-4}\\
		           &\qquad +\dfrac{2|\nabla v_{n}(x_n)|^3}{3(p_{n}-2)}\left(\dfrac{\left(
		           \int_{\mathbb{R}^N}J(x_n-y)
		           (v_{n}(y)-v_n(x_{n}))_+^{p_{n}-1}\, dy
		           \right)^{\frac1{p_n-1}}}{|\nabla v_n(x_n)|}\right)^{p_n-1}\\
		            &\ge-\left[
		        \dfrac{|\nabla v_n(x_{n})|^{2}}{p_{n}-2}
		        \Delta v_n(x_{n})+\Delta_\infty v_n(x_{n}) \right].
        \end{align*}
        Thus, passing to the limit and
            using \eqref{eq:vs9} and  Lemma \ref{lem_CHM} we get
            \[
                0\ge-\Delta_\infty v(x_{0}).
            \]
            Therefore
            \begin{equation}\label{eq:vs10}
	            M_2(v(x_0))\le0.
            \end{equation}

            Finally by \eqref{eq:vs7},\eqref{eq:vs8}, and \eqref{eq:vs10}, we have that
            \[
                \max\Big\{M_1(v(x_0)),M_2(v(x_0))\Big\}\le0
            \]
            and therefore $u_\infty$ is a viscosity
        subsolution of \eqref{eq:limite}.
    \end{proof}

    \section*{Acknowledgements}
    LDP and JDR are partially supported by PIP GI No 11220150100036CO. CONICET (Argentina).
    JDR is also supported by MINECO MTM2015-70227-P (Spain) and
Subsidio 20020160100155BA. UBACyT (Argentina). RF is supported by the Spanish project MTM2017-87596-P.

\end{document}